\titleformat*{\section}{\bf\large}
\theoremstyle{definition}
\newtheorem{theorem}{Theorem}
\newtheorem{lemma}{Lemma}
\newtheorem{remark}{Remark}
\apptocmd{\sloppy}{\hbadness 10000\relax}{}{} 
\newcommand{\E}{\mathbb{E}}
\newcommand{\var}{\textnormal{Var}}
\newcommand{\cov}{\textnormal{Cov}}
\begin{document}


\doublespacing
\title{\bf Sampling-based randomized designs for causal inference under the potential outcomes framework}
\author{Zach Branson\footnote{Corresponding author: 1 Oxford Street, Cambridge, MA 02138, zbranson@g.harvard.edu} \ and Tirthankar Dasgupta\thanks{We would like to thank Marielle Remillard and Chad Vecitis for conversations and collaborative work that originally inspired this work. We would also like to thank an anonymous reviewer and the editor for insightful comments that led to improvements in this work. This research was supported by the National Science Foundation under Grant No. 1144152 and Grant No. DMS 1612901. Any opinions, findings, and conclusions or recommendations expressed in this material are those of the authors and do not necessarily reflect the views of the National Science Foundation.}}

\date{}
\maketitle

\begin{abstract}
\frenchspacing
\noindent
We establish the inferential properties of the mean-difference estimator for the average treatment effect in randomized experiments where each unit in a population is randomized to one of two treatments and then units within treatment groups are randomly sampled. The properties of this estimator are well-understood in the experimental design scenario where first units are randomly sampled and then treatment is randomly assigned, but not for the aforementioned scenario where the sampling and treatment assignment stages are reversed. We find that the inferential properties of the mean-difference estimator under this experimental design scenario are identical to those under the more common sample-first-randomize-second design. This finding will bring some clarifications about sampling-based randomized designs for causal inference, particularly for settings where there is a finite super-population. Finally, we explore to what extent pre-treatment measurements can be used to improve upon the mean-difference estimator for this randomize-first-sample-second design. Unfortunately, we find that pre-treatment measurements are often unhelpful in improving the precision of average treatment effect estimators under this design, unless a large number of pre-treatment measurements that are highly associative with the post-treatment measurements can be obtained. We confirm these results using a simulation study based on a real experiment in nanomaterials.
\end{abstract}

\textit{Keywords}: causal inference, experimental design, potential outcomes, randomized experiments, survey sampling

\section{Introduction: The Potential Outcomes Framework for Randomized Experiments} \label{s:intro}

In most two-armed randomized experiments conducted to compare the causal effects of two treatments (or one treatment and one control), a finite population of $N$ experimental units is considered. These $N$ units receive one of the treatments through a randomized assignment mechanism, and the observed outcomes from the two treatment groups are compared to draw inference on causal estimands of interest. Such causal estimands can be defined in terms of potential outcomes, and the framework for drawing inference in the setup described above is the well-known Neyman-Rubin causal model \citep{sekhon2008neyman} or simply the Rubin causal model \citep{holland1986statistics}. In particular, this model posits that each unit $i=1,\dots,N$ has two fixed potential outcomes, $Y_i(1)$ and $Y_i(0)$, denoting unit $i$'s outcome under treatment and control, respectively, and thus the only stochasticity in a randomized experiment is units' assignment to treatment groups.

The most common estimand in the causal inference literature is the finite-population average treatment effect (ATE), defined as $\tau = \frac{1}{N} \sum_{i=1}^N [Y_i(1) - Y_i(0)]$. In a randomized experiment, ultimately each unit can only be assigned to treatment or control---never both---and thus only $Y_i(1)$ or $Y_i(0)$ is observed for each unit. As a result, $\tau$ must be estimated. The most common estimator for $\tau$ is the mean-difference estimator, i.e., the mean difference in the observed treatment outcomes and observed control outcomes. In this finite population setup where the potential outcomes are assumed fixed, the mean-difference estimator is unbiased for $\tau$, and its sampling variance was derived by \cite{splawa1990application}. Using this estimator and sampling variance, a normal approximation can be used to draw inference on $\tau$. If additional covariates are available for each unit, covariate adjustment can be performed to obtain more precise estimators for $\tau$, such as through post-stratification \citep{miratrix2013adjusting} or regression \citep{lin2013agnostic}.

However, in many practical situations, it may not be possible to observe a response for all $N$ units due to resource constraints. A natural way to address this limitation is to first randomly sample $n$ units from the population and then randomly assign the two treatments to the sampled units. For example, this is a common procedure in political science and other social sciences, where an experiment is conducted on a representative sample of some larger population, often through a well-designed online survey \citep{mutz2011population,coppock2018generalizing,miratrix2018worth}. In this case, the mean-difference estimator is still unbiased for the ATE, and its sampling variance has been studied in works such as \cite{imbens2004nonparametric} and \citet[Chapter 6]{imbens2015causal}. Then, one can still use the mean-difference estimator and its sampling variance to draw inference on $\tau$ via a normal approximation. This scenario---where $n$ units are sampled from a population of $N$ units, and then an experiment is conducted on these $n$ sampled units---is shown in the left panel of Figure \ref{Fig:Two_designs}. 

\begin{figure}
\centering
\begin{subfigure}[t]{0.51\textwidth}
\centering
	\includegraphics[scale=0.35]{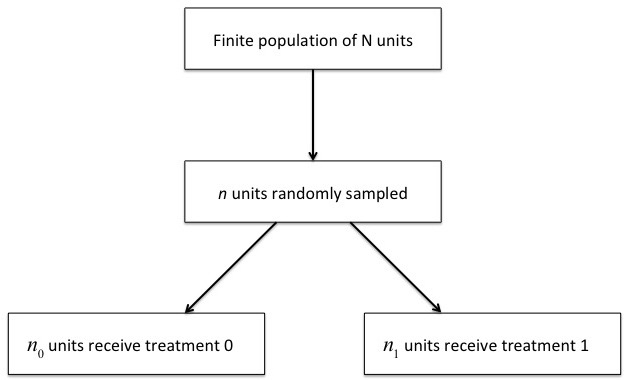}
	\caption{Design 1: Sampling in first stage and randomization in second stage.}
\end{subfigure}%
~
\begin{subfigure}[t]{0.51\textwidth}
\centering
	\includegraphics[scale=0.35]{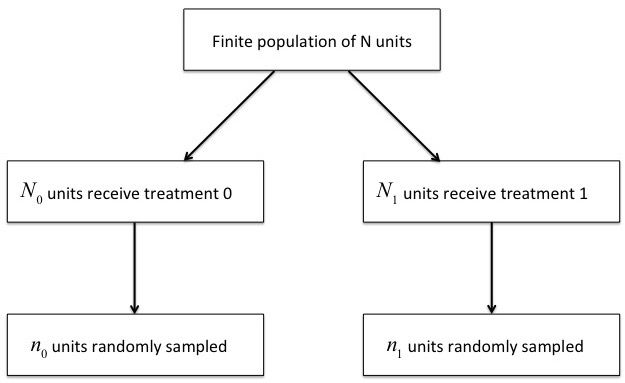}
	\caption{Design 2: Randomization in first stage and sampling in second stage.}
\end{subfigure}
\caption{Two sampling-based designs for estimation of ATE.} \label{Fig:Two_designs}
\end{figure}

This article is motivated by a similar limitation as stated in the previous paragraph, but the experimental design is different in the sense that the order of sampling and randomization is reversed. That is, first, each of the $N$ units is assigned to one of the two treatments. Then, a subset of units is sampled from the units exposed to each treatment group, and the response is measured for each sampled unit. This experimental design scenario is shown in the right panel of Figure \ref{Fig:Two_designs}. Such a strategy may be useful if sampling and measurement of response is more expensive and/or complex compared to treatment assignment.  Examples of experiments that involve this type of design can be found in material science, biomedicine, and the social sciences. For example, in an experiment conducted to assess the difference between two types of oxidation procedures with respect to their impact on the dimension of nanotubes, scientists typically split a population (a container of nanotubes) into two subpopulations (two smaller containers) and apply different oxidation procedures to the two subpopulations \citep{remillard2016electric,remillard2017tuning}. However, since measuring dimensions of nanotubes is an expensive and time-consuming process, a sample of oxidized nanotubes is taken from each subpopulation, and dimensions of the sampled nanotubes are measured. A similar procedure is followed when conducting stem cell experiments \citep{chung2005homeodomain,doi2009differential}.

The main contribution of this article is deriving the sampling properties of the mean-difference estimator for the ATE under Design 2---we do this in Section \ref{s:properties} after setting up some notation in Section \ref{s:notations}. We will find that these sampling properties are identical to those under Design 1, meaning that the ordering of the sampling and randomization stages is inconsequential for the precision of the mean-difference estimator. This finding will also bring some clarifications about inference of the ATE under Design 1, as we discuss in Section \ref{ss:comparison}. In particular, the variance of the mean-difference estimator under Design 1 is often characterized by asymptotic results where $N >> n$ or $N = \infty$ \citep{imbens2004nonparametric,imbens2015causal,sekhon2017inference}, and our work illuminates on the case where the population size and sample size are finite. Our work also gives insight into other sampling-based randomized designs: For example, Design 2 is similar---but not identical---to a cluster-randomized design \citep{campbell2007developments}, where one cluster of size $N_1$ is assigned to treatment group 1 and another cluster of size $N_0$ is assigned to treatment group 0, and random samples are obtained within each cluster. Cluster-randomized designs are quite common in education \citep{hedges2007intraclass}, medicine \citep{eldridge2004lessons}, and psychology \citep{raudenbush1997statistical}, and we discuss how Design 2 compares to cluster-randomized designs in Section \ref{ss:clusterRandomizedDesigns}. Then, in Section \ref{s:pretreat}, we consider the case where baseline (pre-treatment) measurements of the response can be obtained for a sample of experimental units, and we explore the extent to which the mean-difference estimator can be improved under Design 2 by using such measurements. Unfortunately, we will find that pre-treatment measurements are often unhelpful in improving the precision of average treatment effect estimators under this design, unless a large number of pre-treatment measurements that are highly associative with the post-treatment measurements can be obtained. In Section \ref{s:simulations}, we confirm these results by conducting a simulation study based on a real experiment in nanomaterials. In Section \ref{s:discussion}, we conclude.

\section{Setup and Notations} \label{s:notations}

Let the two treatments be denoted by 0 and 1, and for $i=1, \ldots, N$, let $(Y_i(0), Y_i(1))$ denote the potential outcomes for unit $i$ when exposed to treatments 0 and 1, respectively. The unit-level treatment effect is
$$ \tau_i = Y_i(1) - Y_i(0), \quad i=1, \ldots, N, $$
and the finite-population ATE is $$ \tau = N^{-1} \sum_{i=1}^N \tau_i = \bar{Y}(1) - \bar{Y}(0), $$
where $\bar{Y}(1) = N^{-1} \sum_{i=1}^N Y_i(1)$ and $\bar{Y}(0) = N^{-1} \sum_{i=1}^N Y_i(0)$ denote the average potential outcomes for the treatment groups 1 and 0, respectively. We also denote the variances of the potential outcomes for treatment group $T \in \{0,1\}$ as follows:
\begin{eqnarray}
S^2_{T} &=& (N-1)^{-1} \sum_{i=1}^N \left(Y_i(T) - \bar{Y}(T) \right)^2,  \label{eq:varT}
\end{eqnarray}
and the covariance of potential outcomes for treatment groups 1 and 0 by
\begin{equation}
S_{10} = (N-1)^{-1} \sum_{i=1}^N \left(Y_i(1)-\bar{Y}(1) \right) \left(Y_i(0)-\bar{Y}(0) \right). \label{eq:cov10}
\end{equation}
Finally, the variance of the unit-level treatment effects is
\begin{equation}
S^2_{\tau} = (N-1)^{-1} \sum_{i=1}^N \left(\tau_i - \tau \right)^2.  \label{eq:vartau}
\end{equation}
 
As in Design 2 shown in the right panel of Figure \ref{Fig:Two_designs}, $N_{1}$ and $N_{0}$ units are assigned to treatments 1 and 0 respectively, where $N_{1}$ and $N_{0}$ are predetermined. From these two groups, $n_{1}$ and $n_{0}$ units are sampled and their responses are observed. Let $\bar{y}(1)$ and $\bar{y}(0)$ denote the observed averages for the treatment groups 1 and 0, respectively. Then, a natural estimator of $\tau$ is
\begin{equation}
\hat{\tau} = \bar{y}(1) - \bar{y}(0). \label{eq:tauhat}
\end{equation}
We will examine the sampling properties of $\hat{\tau}$ and compare them with those of a similar estimator $\tilde{\tau}$ obtained from Design 1 shown in Figure \ref{Fig:Two_designs}.
 
\section{Sampling properties of the estimator of the ATE} \label{s:properties}

Because the potential outcomes are assumed fixed, the sampling properties of $\hat{\tau}$ defined in (\ref{eq:tauhat}) will be determined by the random variables associated with the randomization and sampling stages. We define two such variables now. For $i=1, \ldots, N$, let $T_i$ denote a Bernoulli random variable indicating the random level of treatment (0 or 1) that unit $i$ receives. Recalling that the assignment mechanism essentially involves a random partitioning of $N$ units into two groups of predetermined sizes $N_0$ and $N_1$, the properties of the assignment vector $(T_1, \ldots, T_N)$ are straightforward to establish and can be found in standard texts (e.g., \citealt{imbens2015causal}).

Next, define $S_{i}^{T_i}$ as an indicator random variable equaling $1$ if the potential outcome $Y_{i}(T_i)$ is randomly sampled among the units assigned to treatment level $T_i \in \{0, 1\}$. Importantly, note that we know that $S_{i}^{1} = 0$ conditional on $T_i = 0$; i.e., the $i$th unit will not be sampled from treatment group 1 if it was assigned to treatment group 0. As shown in the right panel of Figure \ref{Fig:Two_designs}, we assume that samples of size $n_0$ and $n_1$ are sampled from treatment groups 0 and 1, respectively. Properties of the $S_{i}^{T_i}$ are crucial in the derivation of sampling properties of $\hat{\tau}$, and are summarized in Lemma \ref{lem:S_properties}. The proofs are in Appendix \ref{appendixA}.

\begin{lemma} \label{lem:S_properties}
The properties of the sampling indicators $S_i^{T_i}$ can be summarized as:
\begin{eqnarray}
	\E \left[S_{i}^{T_i} \right] &=& n_{T_i}/N,  \quad i = 1, \ldots, N,  \label{eq:S1} \\
	\var \left[ S_{i}^{T_i} \right] &=& n_{T_i}/N \{ 1 - n_{T_i}/N \}, \quad i = 1,\ldots, N, \label{eq:S2} \\
	\cov \left[ S_i^{T_i}, S_{i^{\prime}}^{T_{i^{\prime}}} \right] &=&  \left \{ \begin{array}{ll}
	- n_{T_i} n_{1-T_i} /N^2, & \mbox{if} \quad i=i^{\prime}, T_i \ne T_{i^{\prime}} \\
  - \frac{n_{T_i} \left\{ N - n_{T_i} \right\}}{N^2(N-1)}, & \mbox{if} \quad i \ne i^{\prime}, T_i = T_{i^{\prime}}, \label{eq:S4} \\
\frac{n_{T_i} n_{1-T_i}}{N^2(N-1)}, & \mbox{if} \quad i \ne i^{\prime}, T_i \ne T_{i^{\prime}} 
\end{array}
\right.
	\end{eqnarray}
\end{lemma}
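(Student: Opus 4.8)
The plan is to expose the two nested layers of randomness in each $S_i^{T_i}$ and to compute every quantity by conditioning on the treatment assignment. Writing $\mathbf{T}=(T_1,\dots,T_N)$ for the assignment vector---a uniformly random partition of the $N$ units into groups of fixed sizes $N_1$ and $N_0$---I would, for a fixed treatment level $t\in\{0,1\}$, read $S_i^{t}$ as the indicator that unit $i$ is \emph{both} assigned to $t$ and then drawn in the simple random sample of size $n_t$ taken within group $t$. Conditional on $\mathbf{T}$, the two within-group samples are independent simple random samples without replacement, whose first and second moments are standard; unconditionally, the negative dependence among units comes entirely from the partition structure of $\mathbf{T}$.

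For the mean in (\ref{eq:S1}), I would factor $\E[S_i^{t}]=P(T_i=t)\,P(\text{$i$ sampled}\mid T_i=t)=(N_t/N)(n_t/N_t)$. The key feature---and the reason the statement is so clean---is the cancellation of $N_t$, leaving $n_t/N$. The variance in (\ref{eq:S2}) is then immediate, since $S_i^{t}$ is a Bernoulli indicator and hence $\var[S_i^{t}]=\E[S_i^{t}](1-\E[S_i^{t}])$; alternatively one can obtain it from the law of total variance conditioning on $\mathbf{T}$, which reproduces the same expression after a short cancellation.

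For the covariances in (\ref{eq:S4}) I would compute the cross moment $\E[S_i^{t}S_{i'}^{t'}]$ in each of the three cases as (probability of the relevant assignment event) times (conditional joint sampling probability), and then subtract the product of means $n_t n_{t'}/N^2$. When $i=i'$ but $t\ne t'$, the product $S_i^{1}S_i^{0}$ vanishes identically, so the cross moment is $0$ and the covariance is simply $-n_1 n_0/N^2$. When $i\ne i'$ and $t=t'$, the assignment factor is $N_t(N_t-1)/\{N(N-1)\}$ and the conditional factor is $n_t(n_t-1)/\{N_t(N_t-1)\}$, whose product collapses to $n_t(n_t-1)/\{N(N-1)\}$; subtracting $n_t^2/N^2$ and simplifying gives $-n_t(N-n_t)/\{N^2(N-1)\}$. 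When $i\ne i'$ and $t\ne t'$, the assignment factor is $N_1 N_0/\{N(N-1)\}$ and, because the two groups are sampled independently, the conditional factor is $(n_1/N_1)(n_0/N_0)$; their product is $n_1 n_0/\{N(N-1)\}$, and subtracting $n_1 n_0/N^2$ yields $n_1 n_0/\{N^2(N-1)\}$.

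There is no deep obstacle here---the content is essentially careful bookkeeping---but two points require attention. First, in every case the group sizes $N_t$ cancel between the assignment probabilities and the within-group conditional sampling probabilities, and this cancellation is exactly what makes the final moments depend only on $n_t$ and $N$ (and is ultimately why Design 2 will match Design 1); I would verify these cancellations carefully. Second, one must keep the two sources of dependence separate: conditional on $\mathbf{T}$ the within-group samples are independent, while the partition itself induces the negative correlations recorded in (\ref{eq:S4}). The $i\ne i'$, $t=t'$ case is where signs are easiest to mishandle, so I would double-check that algebraic simplification most carefully.
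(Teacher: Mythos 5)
Your proposal is correct and follows essentially the same route as the paper's own proof: factoring each moment as (assignment probability) $\times$ (conditional within-group sampling probability), noting the zero cross moment when $i=i'$ with $T_i \ne T_{i'}$, exploiting conditional independence of the two within-group samples in the cross-treatment case, and carrying out the same cancellations of $N_{T_i}$ and final algebraic simplifications. All intermediate quantities you report (e.g., $n_t(n_t-1)/\{N(N-1)\}$ and $n_1 n_0/\{N(N-1)\}$ for the cross moments) match the paper's derivation exactly.
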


\begin{remark} \label{rem:lemma2}
To provide some intuition for (\ref{eq:S4}), note that we expect the quantity $\cov \left[ S_i^{T_i}, S_{i^{\prime}}^{T_{i^{\prime}}} \right]$ to be negative if $T_i = T_{i^{\prime}}$, because if we know that unit $i^{\prime}$ is sampled in treatment group $T_i$, then unit $i$ is less likely to be sampled in the same group. Similarly, we expect the quantity $\cov \left[ S_i^{0}, S_{i^{\prime}}^{1} \right]$ to be positive, because if we know that unit $i^{\prime}$ is sampled in treatment group 1, then unit $i$ is more likely to be sampled in treatment group 0.
\end{remark}

We now represent the estimator $\hat{\tau}$ in (\ref{eq:tauhat}) in terms of the potential outcomes and the random variables $T_i$ and $S_i^{T_i}$ defined above. To do this, note that the average observed response in treatment group $T \in \{0, 1\}$ can be written as
\begin{equation}
\bar{y}(T) = (n_{T})^{-1} \sum_{i=1}^N S_i^{T} \ Y_i(T), \ i=1, \ldots, N. \label{eq:groupavg}
\end{equation}
By combining (\ref{eq:tauhat}) and (\ref{eq:groupavg}) and using Lemma \ref{lem:S_properties}, we now derive the sampling properties of the estimator $\hat{\tau}$ and summarize them in Theorem \ref{thm:tauhat_properties}. The proof of Theorem \ref{thm:tauhat_properties} is in Appendix \ref{appendixB}.

\begin{theorem} \label{thm:tauhat_properties}
The estimator $\hat{\tau}$ given by (\ref{eq:tauhat}) satisfies the following properties:
\begin{enumerate}
\item $\hat{\tau}$ is an unbiased estimator of the ATE $\tau$.
\item The sampling variance of $\hat{\tau}$ is given by
\begin{equation}
\var(\hat{\tau}) = S_1^2/n_1 + S_0^2/n_0 - S^2_{\tau}/N, \label{eq:vartauhat}
\end{equation}
where $S^2_0$ and $S^2_1$ are given by (\ref{eq:varT}) and $S^2_{\tau}$ is given by (\ref{eq:vartau}).
\end{enumerate}
\end{theorem}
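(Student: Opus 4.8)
The plan is to work from the representation $\bar y(T) = n_T^{-1}\sum_{i=1}^N S_i^T Y_i(T)$ in (\ref{eq:groupavg}), so that $\hat\tau = n_1^{-1}\sum_i S_i^1 Y_i(1) - n_0^{-1}\sum_i S_i^0 Y_i(0)$ becomes a linear combination of the sampling indicators with fixed (nonrandom) coefficients. Unbiasedness then follows immediately from linearity of expectation together with $\E[S_i^T] = n_T/N$ from (\ref{eq:S1}): each group mean satisfies $\E[\bar y(T)] = n_T^{-1}\sum_i (n_T/N) Y_i(T) = \bar Y(T)$, whence $\E[\hat\tau] = \bar Y(1) - \bar Y(0) = \tau$. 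For the variance I would use the decomposition $\var(\hat\tau) = \var(\bar y(1)) + \var(\bar y(0)) - 2\cov(\bar y(1), \bar y(0))$ and evaluate the three pieces separately using Lemma \ref{lem:S_properties}.

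For a single group mean I would expand $\var(\bar y(1)) = n_1^{-2}\big[\sum_i Y_i(1)^2\, \var(S_i^1) + \sum_{i\ne i'} Y_i(1) Y_{i'}(1)\,\cov(S_i^1, S_{i'}^1)\big]$ and substitute $\var(S_i^1) = (n_1/N)(1 - n_1/N)$ from (\ref{eq:S2}) and the $T_i = T_{i'}$ entry of (\ref{eq:S4}). The remaining work is the standard survey-sampling manipulation: rewrite $\sum_i Y_i(1)^2$ and $\sum_{i\ne i'} Y_i(1) Y_{i'}(1)$ in terms of $S_1^2$ and $\bar Y(1)$ via $\sum_{i\ne i'} Y_i(1)Y_{i'}(1) = (\sum_i Y_i(1))^2 - \sum_i Y_i(1)^2$, after which the $\bar Y(1)^2$ terms cancel and one is left with $\var(\bar y(1)) = (S_1^2/n_1)(1 - n_1/N)$, and symmetrically for group $0$. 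As a sanity check, this is exactly the simple-random-sampling variance one expects, since composing the uniform assignment of $N_1$ units with the uniform sampling of $n_1$ of them makes $\{i : S_i^1 = 1\}$ a uniform size-$n_1$ subset of the whole population.

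The genuinely new computation is the cross-covariance, where the interaction between the two groups enters. Here I would write $\cov(\bar y(1), \bar y(0)) = (n_1 n_0)^{-1}\sum_i \sum_{i'} Y_i(1) Y_{i'}(0)\,\cov(S_i^1, S_{i'}^0)$ and split into the diagonal term $i = i'$, which uses the first case of (\ref{eq:S4}) giving $-n_1 n_0/N^2$, and the off-diagonal term $i \ne i'$, which uses the third case giving $+n_1 n_0/[N^2(N-1)]$. The prefactor $n_1 n_0$ cancels, and after substituting $\sum_{i\ne i'} Y_i(1) Y_{i'}(0) = N^2 \bar Y(1)\bar Y(0) - \sum_i Y_i(1) Y_i(0)$ together with $\sum_i Y_i(1) Y_i(0) = (N-1) S_{10} + N\bar Y(1)\bar Y(0)$ from (\ref{eq:cov10}), the cross-products collapse to $\cov(\bar y(1), \bar y(0)) = -S_{10}/N$.

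Combining the three pieces yields $\var(\hat\tau) = S_1^2/n_1 + S_0^2/n_0 - (S_1^2 + S_0^2 - 2S_{10})/N$, and the proof finishes by invoking the decomposition $S_\tau^2 = S_1^2 + S_0^2 - 2 S_{10}$, which follows by expanding $(\tau_i - \tau)^2 = [(Y_i(1) - \bar Y(1)) - (Y_i(0) - \bar Y(0))]^2$ and summing over $i$. I expect the main obstacle to lie entirely in the bookkeeping of the covariance term: keeping the $i = i'$ versus $i \ne i'$ cases and their opposite signs straight, and carrying out the cancellations so that every $\bar Y(T)$ term disappears and only the population variances and covariance survive. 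None of the steps is conceptually deep, but the sign conventions in (\ref{eq:S4}) must be applied with care.
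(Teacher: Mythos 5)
Your proposal is correct and follows essentially the same route as the paper's own proof: unbiasedness via $\E[S_i^T]=n_{T}/N$, then the decomposition $\var(\hat\tau)=\var(\bar y(1))+\var(\bar y(0))-2\cov(\bar y(1),\bar y(0))$, with the group variances reducing to the simple-random-sampling form $(1-n_T/N)S_T^2/n_T$, the cross-covariance collapsing to $-S_{10}/N$, and the identity $S_\tau^2=S_1^2+S_0^2-2S_{10}$ finishing the calculation. These are exactly the contents of Lemmas \ref{lem:theorem2_lem1}--\ref{lem:theorem2_lem3} in Appendix \ref{appendixB}, so no gap or divergence to report.
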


\subsection{Comparison with two other sampling-based randomized designs} \label{ss:comparison}

We now compare the sampling properties of $\hat{\tau}$ as derived in Theorem \ref{thm:tauhat_properties} with the unbiased estimators of $\tau$ obtained from two other designs: (i) a design where responses for all units are observed, that is, no sampling is involved, and (ii) a design described as Design 1 in the left panel of Figure \ref{Fig:Two_designs}, where $n$ units are first sampled from $N$ units in the population and subsequently exposed to treatments. We denote the estimator of $\tau$ from (i) by $\widehat{\widehat{\tau}}$ and that from (ii) by $\widetilde{\tau}$. Both $\widehat{\widehat{\tau}}$ and $\widetilde{\tau}$ are unbiased estimators of $\tau$. \cite{splawa1990application} derived the following result on the sampling variance of $\widehat{\widehat{\tau}}$:
\begin{equation}
\var \left( \widehat{\widehat{\tau}} \right) = S_1^2/N_1 + S_0^2/N_0 - S^2_{\tau}/N. \label{eq:varNeyman}
\end{equation}
As a result, we obtain the inequality $\var \left( \widehat{\widehat{\tau}} \right) < \var \left(\hat{\tau} \right)$, reflecting the price one has to pay for sampling from the finite population.

Meanwhile, to our knowledge, \citet[Chapter 6, Appendix B]{imbens2015causal} were the first to derive the expression of the finite-population sampling variance of $\widetilde{\tau}$, although others have discussed the infinite-population case where $N = \infty$ (e.g., \citealt{imbens2004nonparametric}). However, there was a slight error in the derivation in \citet[Chapter 6, Appendix B]{imbens2015causal}: The covariance between the sampling indicators was stated to be $-\left(\frac{n}{N}\right)^2$, when really it is $-\frac{n(N-n)}{N^2(N-1)}$, as can be seen by the $i\neq i'$, $T_i = T_{i'}$ case in Lemma \ref{lem:S_properties} and other works on survey sampling (e.g., \citealt[Page 53]{lohr2009sampling}). As we show in Appendix \ref{appendixC}, by making this correction, and then following the derivation of \citet[Chapter 6, Appendix B]{imbens2015causal}, we have the following expression of the sampling variance of $\widetilde{\tau}$:
\begin{align}
	\var \left(\widetilde{\tau} \right) = S_1^2/n_1 + S_0^2/n_0 - S^2_{\tau}/N = \var \left(\hat{\tau} \right) \label{eqn:comparison}
\end{align}
Thus, the precision of the mean-difference estimator under Design 1 is identical to that under Design 2; in other words, the order of the sampling and randomization stages is inconsequential in terms of the precision of the mean-difference estimator. This finding also clarifies some discussions in the causal inference literature that consider a super-population framework. When discussing causal inference under a super-population framework, many works implicitly assume that $N >> n$ or $N = \infty$, and thus the third term in $\var \left(\widetilde{\tau} \right)$, $S^2_{\tau}/N$, is often ignored (\cite{imbens2004nonparametric}, \citet[Chapter 6]{imbens2015causal}, \cite{ding2017bridging}, \cite{sekhon2017inference}). As we will see in Section \ref{ss:var_estimation}, this term is ignored in the \textit{estimation} of $\var (\hat{\tau})$ or $\var (\widetilde{\tau})$, because the observed data do not provide any information about $S^2_{\tau}$, since none of the individual $\tau_i$ are ever observed. However, the above derivation emphasizes that this term nonetheless exists in the \textit{true} variance of the mean-difference estimator under Design 1 and Design 2 when the super-population that is sampled from is finite.

\subsection{Comparison with cluster-randomized designs} \label{ss:clusterRandomizedDesigns}

As discussed in Section \ref{s:intro}, Design 2 is similar but not identical to a cluster-randomized design, where treatment is assigned among clusters of units but response is measured at the unit level. The most important distinction between Design 2 and a cluster-randomized design is that, in Design 2, $N_0$ and $N_1$ (the number of units assigned to treatment groups 0 and 1, respectively) are fixed, and in a cluster-randomized design, these quantities are stochastic. For example, consider a cluster-randomized design with two clusters, where Cluster A has $N_A$ units and Cluster B has $N_B$ units, and one cluster is randomly assigned to treatment group 0 and the other to treatment group 1. There is a 0.5 probability that $N_0 = N_A$ and a 0.5 probability that $N_0 = N_B$, and $N_1$ is analogously stochastic.

This stochasticity causes many complications to inference---for example, the mean-difference estimator is often biased in cluster-randomized designs, even when all units' responses can be measured \citep{middleton2008bias,middleton2015unbiased}. Consequently, there is not a straightforward analog to our results that will hold for cluster-randomized designs; indeed, our results will not even hold for the simple two-cluster example mentioned in the previous paragraph. Because we require that $N_0$ and $N_1$ are fixed, our results will hold for cluster-randomized designs if the number of units within each cluster is the same across clusters, but this is rarely the case. As discussed in \cite{middleton2008bias} and \cite{middleton2015unbiased}, cluster sizes and the covariance between treatment group sizes and treatment effects are important quantities for deriving inferential properties of ATE estimators in cluster-randomized designs, and likely these quantities would be similarly important in deriving analogous results when there is a sampling stage after treatment assignment at the cluster level. We leave this for future work.

\subsection{Estimation of sampling variance and approximate confidence intervals} \label{ss:var_estimation} 

Let
\begin{equation}
s^2_0 = \frac{1}{n_0 - 1} \sum_{i: S_i^{0}=1} \left\{ Y_i(0) - \bar{y}(0) \right\}^2 \quad \mbox{and} \quad
s^2_1 = \frac{1}{n_1 - 1} \sum_{i: S_i^{1}=1} \left\{ Y_i(1) - \bar{y}(1) \right\}^2  \label{eq:sample_var}
\end{equation}
denote the sample variances of observed responses for treatment groups 0 and 1, respectively. From standard sampling theory (e.g., \citealt{lohr2009sampling}, Pages 52-54), it follows that $s^2_0$ and $s^2_1$ are unbiased estimators of $S^2_0$ and $S^2_1$, respectively. Consequently,
\begin{equation}
v_{\hat{\tau}} = s^2_0/n_0 + s^2_1 /n_1 \label{eq:var_estimator}
\end{equation}
is a natural Neymanian-style estimator of $\var(\hat{\tau})$ and $\var(\widetilde{\tau})$, which, as seen in Section \ref{ss:comparison}, has an upward bias of $S^2_{\tau}/N$ unless strict additivity holds. This estimator---originating in \cite{splawa1990application}---is by far the most common estimator for the variance of the ATE in randomized experiments \citep{rubin1990comment,imbens2004nonparametric,miratrix2013adjusting,imbens2015causal,ding2017bridging}, and thus it is reassuring that it can be used for $\var(\hat{\tau})$ and $\var(\widetilde{\tau})$, i.e., it can be used under Design 1 or Design 2.

Estimator (\ref{eq:var_estimator}) can be used to obtain approximate confidence intervals for $\tau$ as 
$$ \hat{\tau} \pm \Phi^{-1}(1-\alpha/2) \sqrt{v_{\hat{\tau}}}, $$
where $\Phi^{-1}(\cdot)$ denotes the quantile function of a standard normal distribution. The asymptotic normality of $\hat{\tau}$ is based on the finite population central limit theorem \citep{hajek1960limiting} and its application in the context of randomized experiments by \cite{li2017general}. 

\section{Can the estimator be improved using pre-treatment measurements?} \label{s:pretreat}

Now assume that the response can be measured for each experimental unit prior to application of the treatments. Let $X_1, \ldots, X_N $ denote these measurements for the $N$ units, which are fixed quantities like the potential outcomes. The unit-level differences $\Delta_{1i} = Y_i(1) - X_i$ and $\Delta_{0i} = Y_i(0) - X_i$ for $i=1, \ldots, N$ are referred to as ``gain scores''  in the psychology \citep{bellini2017research} and education \citep{mcgowen2002growth} literature. While the unit-level gain scores or their averages $\Delta_1 = N^{-1} \sum_{i=1}^N \Delta_{1i}$ and $\Delta_0 = N^{-1} \sum_{i=1}^N \Delta_{0i}$ are of interest in psychology and education \citep{hake1998interactive}, they are not referred to as \textit{causal} estimands or \textit{causal} effects \citep{rubin2004potential,imbens2015causal}. However, it is easy to see that the unit-level and average treatment effects can respectively be expressed as the unit-level and average differences of the gain scores, i.e., $\tau_i = \Delta_{1i} - \Delta_{0i}$ and $\tau = \Delta_1 - \Delta_0$. In spite of this connection, several experts in education and psychology have recommended avoiding the use of gain scores when estimating treatment effects in experiments and observational studies. \cite{campbell1970regression} claimed, ``gain scores are in general $\ldots$ a treacherous quicksand,'' and \cite{cronbach1970we} recommended researchers to ``frame their questions in other ways.'' Despite some recent interest on utilizing gain scores to identify causal effects, there appears to be a general aversion in the causal inference community towards the use of gain scores. Here we explore whether, in the current setup, a proper design and analysis of the experiment using gain scores can potentially lead to more precise estimation of treatment effects under certain assumptions.
 
In the setup of Design 2 in Figure \ref{Fig:Two_designs}, where measuring the pre-treatment response for each unit is not feasible, it is often a common practice to measure the pre-treatment response for a random sample of size $n_x$ to estimate the average gain scores $\Delta_1$ and $\Delta_0$. Denoting the sampling indicator by $Z_1, \ldots, Z_N$, the estimators of $\Delta_1$ and $\Delta_0$ are $\bar{y}(1) - \bar{x}$ and $\bar{y}(0) - \bar{x}$, respectively, where $\bar{x} = n_x^{-1} \sum_{i=1}^N Z_i X_i$. While the sampling properties of these estimators can be readily obtained, they do not help in increasing the precision of the estimator of $\hat{\tau}$ because the average pre-treatment scores get canceled out in the difference of the gain score estimators. However, if samples of $n_{x_1}$ and $n_{x_0}$ pre-treatment observations are taken independently from the treatment groups 1 and 0 \textit{after assignment} but \textit{before administration} of the treatments, it is possible to obtain a different estimator of the ATE. For $i = 1, \ldots, N$, let $Z_i^{T_i}$ denote the sampling indicator associated with the random sampling of $X_i$ among the units assigned to treatment $T_i$. Then 
\begin{equation}
\bar{x}(1) = (n_{x_1})^{-1} \sum_{i=1}^N Z_i^1 X_i, \quad \mbox{and} \quad \bar{x}(0) = (n_{x_0})^{-1} \sum_{i=1}^N Z_i^0 X_i, \label{eq:x_avg}
\end{equation}
are the observed sample averages of pre-treatment responses for the two treatment groups. Then we can define the following estimator of the ATE:
\begin{equation}
\hat{\tau}^* = \hat{\Delta}_1 - \hat{\Delta}_0 = \left\{ \bar{y}(1) - \bar{x}(1) \right\} - \left\{ \bar{y}(0) - \bar{x}(0) \right\} =\hat{\tau} - \left\{ \bar{x}(1) - \bar{x}(0) \right\}, \label{eq:improved_est} 
\end{equation} 
where $\bar{x}(1)$ and $\bar{x}(0)$ are given by (\ref{eq:x_avg}). The sampling properties of $\hat{\tau}^*$ depend on the distribution of $Z^{T_i}$ and the joint distribution of $S^{T_i}$ and $Z^{T_i}$. These properties are summarized in the following two lemmas.

\begin{lemma} \label{lem:Z_properties}
The properties of the indicators $Z_i^{T_i}$ are exactly the same as those of $S_i^{T_i}$ stated in Lemma \ref{lem:S_properties}, just replacing $n_{T_i}$ by $n_{x_{T_i}}$ and $n_{1-T_i}$ by $n_{x_{1 - T_i}}$.
\end{lemma}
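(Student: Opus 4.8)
The plan is to observe that Lemma \ref{lem:Z_properties} is not a claim requiring fresh computation, but rather a statement that the sampling indicators $Z_i^{T_i}$ are \emph{structurally identical} to the indicators $S_i^{T_i}$ of Lemma \ref{lem:S_properties}, up to relabeling the sample sizes. First I would verify that the probabilistic mechanism generating the $Z_i^{T_i}$ is governed by exactly the same two-stage experiment as the $S_i^{T_i}$: the treatment assignment vector $(T_1,\dots,T_N)$ partitions the $N$ units into fixed groups of sizes $N_1$ and $N_0$, and then within treatment group $T$ a \emph{simple random sample without replacement} of size $n_{x_T}$ is drawn to determine which pre-treatment measurements $X_i$ are observed. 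The only difference from the setup defining $S_i^{T_i}$ is that the sample size within group $T$ is $n_{x_T}$ rather than $n_T$. Since nothing in the derivation of equations (\ref{eq:S1})--(\ref{eq:S4}) uses any property of the sample size beyond the bookkeeping symbol $n_{T_i}$, the entire computation carries over verbatim under the substitution $n_{T_i}\mapsto n_{x_{T_i}}$ and $n_{1-T_i}\mapsto n_{x_{1-T_i}}$.

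Concretely, I would argue this by appealing to the \emph{exchangeability} of the two sampling stages. The indicator $S_i^{T_i}$ depends on the joint randomness of assignment and the post-treatment sample; the indicator $Z_i^{T_i}$ depends on the joint randomness of assignment and an independently drawn pre-treatment sample of possibly different size. Because the marginal law of $(T_1,\dots,T_N)$ is common to both, and because conditional on the assignment the pre-treatment sampling within each group is an independent simple random sample without replacement of size $n_{x_T}$, the pair $(T_i, Z_i^{T_i})$ has exactly the same distributional form as $(T_i, S_i^{T_i})$ after renaming the group sample sizes. I would therefore write that the proof follows \emph{mutatis mutandis} from the proof of Lemma \ref{lem:S_properties} in Appendix \ref{appendixA}, with $n_{T_i}$ replaced by $n_{x_{T_i}}$ throughout; each of the four cases in (\ref{eq:S4}) and the expressions (\ref{eq:S1}), (\ref{eq:S2}) transforms under this single substitution.

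The one point requiring a moment of care is the constraint $Z_i^{1}=0$ conditional on $T_i=0$ (and symmetrically), which mirrors the analogous structural constraint on $S_i^{T_i}$ noted in the text: a unit assigned to control cannot be sampled for its pre-treatment measurement \emph{from the treatment-1 pool}. I would confirm that this constraint is preserved because the pre-treatment samples are drawn \emph{within} each assigned treatment group, exactly as the post-treatment samples are. This is what guarantees that the same-unit, opposite-treatment covariance in (\ref{eq:S4}) takes the form $-n_{x_{T_i}}n_{x_{1-T_i}}/N^2$ rather than something requiring a separate argument.

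The main obstacle, such as it is, is purely expository rather than mathematical: one must be careful that the substitution rule is applied consistently, since the off-diagonal covariances involve \emph{both} $n_{T_i}$ and $n_{1-T_i}$, so both symbols must be replaced by their $x$-subscripted counterparts simultaneously. There is no genuine analytic difficulty, because the pre-treatment sampling is by construction an independent simple random sample without replacement within each fixed-size treatment group, which is precisely the mechanism already analyzed for $S_i^{T_i}$. I would accordingly keep the proof brief, stating that it is immediate from Lemma \ref{lem:S_properties} under the indicated relabeling of sample sizes.
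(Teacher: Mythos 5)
Your proposal is correct and matches the paper's treatment exactly: the paper gives no separate proof for Lemma \ref{lem:Z_properties}, treating it---just as you do---as immediate from the proof of Lemma \ref{lem:S_properties}, since the pre-treatment sampling is a simple random sample without replacement of size $n_{x_T}$ within each fixed treatment group, so every step of Appendix \ref{appendixA} carries over under the substitution $n_{T_i}\mapsto n_{x_{T_i}}$, $n_{1-T_i}\mapsto n_{x_{1-T_i}}$. Your added care about the structural constraint ($Z_i^1=0$ when $T_i=0$) and the consistent relabeling in the off-diagonal covariances is sound and consistent with the paper's setup.
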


\begin{lemma} \label{lem:SZ_properties}
For $i, i^{\prime} = 1, \ldots, N$, the covariance between the indicators $S_i^{T_i}$ and $Z_{i^{\prime}}^{T_{i^{\prime}}}$ is given by:
\begin{eqnarray*}
\cov\left[ S_i^{T_i}, Z_{i^{\prime}}^{T_{i^{\prime}}} \right] = 
\left \{ \begin{array}{ll}
\frac{n_{T_i} n_{x_{T_i}} N_{1-T_i}}{N_{T_i} N^2}, & \mbox{if} \quad i=i^{\prime}, T_i = T_{i^{\prime}}, \\
- \frac{n_{T_i} n_{x_{1-T_i}}}{N^2}, & \mbox{if} \quad i=i^{\prime}, T_i \ne T_{i^{\prime}}, \\
- \frac{n_{T_i} n_{x_{T_i}} N_{1-T_i}}{N_{T_i} N^2 (N-1) } & \mbox{if} \quad i \ne i^{\prime}, T_i = T_{i^{\prime}}, \\
\frac{n_{T_i} n_{x_{1-T_i}}}{N^2(N-1)}& \mbox{if} \quad i \ne i^{\prime}, T_i \ne T_{i^{\prime}}.
\end{array} \right.
\end{eqnarray*}
\end{lemma}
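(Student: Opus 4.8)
The plan is to reduce the joint behavior of the two sampling mechanisms to the behavior of the treatment-assignment indicators alone, by conditioning on the assignment vector $A = (T_1, \ldots, T_N)$ and exploiting the independence of the pre- and post-treatment sampling stages. First I would write each indicator as a product, $S_i^{t} = W_i^{t}\,\tilde S_i$ and $Z_{i'}^{t'} = W_{i'}^{t'}\,\tilde Z_{i'}$, where $W_i^{t} = \mathbf{1}(T_i = t)$ is the assignment indicator and $\tilde S_i,\tilde Z_{i'}$ are the within-group sampling indicators. Here I treat the superscripts $t,t'$ as fixed values in $\{0,1\}$ and read off the four cases of the lemma according to whether $i=i'$ and whether $t=t'$; this is legitimate because a unit can be sampled in group $t$ only if it was assigned there, so the indicator $S_i^{T_i}$ coincides with the $S_i^{t}$ indexed by the realized treatment, exactly as in Lemma \ref{lem:S_properties}.

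The key structural step is the law of total covariance,
\[
\cov\!\left[S_i^{t}, Z_{i'}^{t'}\right] = \E\!\left[\cov\!\left(S_i^{t}, Z_{i'}^{t'} \mid A\right)\right] + \cov\!\left(\E[S_i^{t}\mid A],\, \E[Z_{i'}^{t'}\mid A]\right).
\]
Because the pre- and post-treatment samples are drawn independently given the assignment, the within-group indicators $\tilde S_i$ and $\tilde Z_{i'}$ are conditionally independent given $A$, so the first term vanishes identically (including the case $i=i'$, where the two draws concern the same unit but are still independent). For the second term I would use $\E[S_i^{t}\mid A] = W_i^{t}\,n_{t}/N_{t}$ and $\E[Z_{i'}^{t'}\mid A] = W_{i'}^{t'}\,n_{x_{t'}}/N_{t'}$, since conditional on being assigned to group $t$ a unit is sampled with probability $n_{t}/N_{t}$ and otherwise has probability zero of being sampled there. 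This collapses the whole computation to
\[
\cov\!\left[S_i^{t}, Z_{i'}^{t'}\right] = \frac{n_{t}}{N_{t}}\,\frac{n_{x_{t'}}}{N_{t'}}\,\cov\!\left(W_i^{t}, W_{i'}^{t'}\right),
\]
so the only quantity left to evaluate is the covariance of the assignment indicators.

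Finally I would fill in $\cov(W_i^{t}, W_{i'}^{t'})$ from the standard properties of a random partition of $N$ units into fixed groups of sizes $N_0, N_1$ (the assignment-vector facts referenced in Section \ref{s:properties}): $\E[W_i^{t}] = N_{t}/N$; for $i=i'$ we have $\E[W_i^{t}W_i^{t'}] = N_t/N$ if $t=t'$ and $0$ if $t\neq t'$; and for $i\neq i'$ we have $\E[W_i^{t}W_{i'}^{t'}] = N_t(N_t-1)/\{N(N-1)\}$ if $t=t'$ and $N_t N_{t'}/\{N(N-1)\}$ if $t\neq t'$. Subtracting the product of means in each of the four cases gives, respectively, $N_t N_{1-t}/N^2$, $-N_t N_{t'}/N^2$, $-N_t N_{1-t}/\{N^2(N-1)\}$, and $N_t N_{t'}/\{N^2(N-1)\}$; multiplying each by the prefactor $\tfrac{n_t}{N_t}\tfrac{n_{x_{t'}}}{N_{t'}}$ and simplifying reproduces the four entries of the lemma.

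I do not expect any single step to be a serious obstacle; the main point of care is bookkeeping. One must keep the conditional expectations (which carry group sizes $N_{t}$ in the denominator) distinct from the unconditional ones (which carry $N$), correctly pair the group-size and sample-size subscripts in each case, and track the sign and the placement of the $(N-1)$ factor that distinguishes the $i=i'$ cases from the $i\neq i'$ cases. The one conceptual hinge is recognizing that conditional independence of the two sampling stages annihilates the conditional-covariance term, leaving a computation driven entirely by the shared randomness in the treatment assignment.
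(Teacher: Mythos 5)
Your proposal is correct --- I checked all four specializations: multiplying the prefactor $\frac{n_t}{N_t}\frac{n_{x_{t'}}}{N_{t'}}$ by your assignment-indicator covariances $\frac{N_t N_{1-t}}{N^2}$, $-\frac{N_t N_{t'}}{N^2}$, $-\frac{N_t N_{1-t}}{N^2(N-1)}$, and $\frac{N_t N_{t'}}{N^2(N-1)}$ reproduces the lemma's four entries exactly --- but your route is genuinely different in organization from the paper's. The paper (Appendix D) works case by case: in each of the four cases it writes $\cov[S_i^{T_i}, Z_{i'}^{T_{i'}}] = \E[S_i^{T_i}Z_{i'}^{T_{i'}}] - \E[S_i^{T_i}]\E[Z_{i'}^{T_{i'}}]$ and evaluates the joint expectation by chaining conditional probabilities, invoking the same conditional-independence fact you use but only locally, through steps like $Pr(S_i^{T_i}=1 \mid Z_{i'}^{T_{i'}}=1) = Pr(S_i^{T_i}=1 \mid \text{assignment of } i')$. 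Your law-of-total-covariance decomposition given the full assignment vector makes that fact do all the work at once: the conditional covariance term vanishes identically (this is legitimate precisely because, as the paper also implicitly assumes, the pre- and post-treatment samples are drawn independently within groups given the assignment, even for the same unit), and the whole lemma collapses to the single identity $\cov[S_i^{t}, Z_{i'}^{t'}] = \frac{n_t\, n_{x_{t'}}}{N_t N_{t'}}\cov(W_i^{t}, W_{i'}^{t'})$, with the four cases read off from the standard second moments of completely randomized assignment. What each approach buys: the paper's is elementary and self-contained per case, while yours is shorter, makes the signs and the placement of the $(N-1)$ factor transparent (they are inherited directly from the assignment covariances), and exposes the structural content of the lemma --- all dependence between the two samples flows through the shared assignment randomness. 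Your unified route also sidesteps a slip in the paper's third case, where the intermediate conditional probability is written as $\frac{n_{T_i}-1}{N_{T_i}-1}$; the correct value is $\frac{N_{T_i}-1}{N-1}\cdot\frac{n_{T_i}}{N_{T_i}}$ (the paper's final displayed covariance is nonetheless correct, as your derivation independently confirms). One caveat on scope: the reduction works here because $S$ and $Z$ come from \emph{distinct} sampling stages; it would not carry over verbatim to Lemma \ref{lem:S_properties}, where two indicators from the same within-group sample have a nonvanishing conditional covariance given the assignment.
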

\noindent
The proof for Lemma \ref{lem:SZ_properties} is in Appendix \ref{appendixD}. Using Lemmas \ref{lem:Z_properties} and \ref{lem:SZ_properties}, we arrive at the following result:
\begin{theorem} \label{thm:tauhatstar_properties}
The estimator $\hat{\tau}^*$ given by (\ref{eq:improved_est}) satisfies the following properties:
\begin{enumerate}
\item $\hat{\tau}^*$ is an unbiased estimator of the ATE $\tau$.
\item The sampling variance of $\hat{\tau}^*$ is given by
\begin{equation}
\var(\hat{\tau}^*) = \var(\hat{\tau}) + \left(\frac{1}{n_{x_1}} + \frac{1}{n_{x_0}} \right) S_x^2 - 2 \left( \frac{S_{1x}}{N_1} + \frac{S_{0x}}{N_0} \right), \label{eq:vartauhatstar}
\end{equation}
where $\var(\hat{\tau})$ is given by (\ref{eq:vartauhat}) and
\begin{eqnarray}
S^2_x &=& (N-1)^{-1} \sum_{i=1}^N (X_i - \bar{X})^2, \label{eq:varx} \\ 
S_{Tx} &=& (N-1)^{-1} \sum_{i=1}^N \left\{ Y_i(T) - \bar{Y}(T) \right\} \{X_i - \bar{X}\}, \quad T \in \{0,1\}. \label{eq:cov_yx}
\end{eqnarray}
\end{enumerate}
\end{theorem}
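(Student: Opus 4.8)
The plan is to write $\hat{\tau}^* = \hat{\tau} - D$ with $D = \bar{x}(1) - \bar{x}(0)$ and exploit the decomposition $\var(\hat{\tau}^*) = \var(\hat{\tau}) + \var(D) - 2\cov(\hat{\tau}, D)$, handling the three pieces separately. Unbiasedness is the easy part: since $\hat{\tau}$ is unbiased for $\tau$ by Theorem \ref{thm:tauhat_properties}, it suffices to show $\E[D] = 0$. By linearity and Lemma \ref{lem:Z_properties}, $\E[Z_i^1] = n_{x_1}/N$ and $\E[Z_i^0] = n_{x_0}/N$, so $\E[\bar{x}(1)] = \E[\bar{x}(0)] = \bar{X}$ and hence $\E[D] = 0$, giving $\E[\hat{\tau}^*] = \tau$.

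For $\var(D)$ I would observe that $D$ has exactly the algebraic form of the mean-difference estimator $\hat{\tau}$, but with the common pre-treatment value $X_i$ playing the role of \emph{both} potential outcomes and with sample sizes $n_{x_1}, n_{x_0}$ in place of $n_1, n_0$; moreover, by Lemma \ref{lem:Z_properties} the $Z$ indicators obey the same moment structure as the $S$ indicators. The variance formula (\ref{eq:vartauhat}) therefore transfers verbatim under these substitutions. Since the ``unit-level treatment effect'' $X_i - X_i$ is identically zero, the analogue of $S^2_{\tau}$ vanishes, leaving $\var(D) = (1/n_{x_1} + 1/n_{x_0})S_x^2$, which is the second term of (\ref{eq:vartauhatstar}).

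The substance of the proof is the cross-covariance $\cov(\hat{\tau}, D)$, which I would expand bilinearly into the four terms $\cov(\bar{y}(t), \bar{x}(t'))$ for $t, t' \in \{0,1\}$. Each becomes a double sum $\sum_i \sum_j Y_i(t)\, X_j\, \cov(S_i^t, Z_j^{t'})$, and Lemma \ref{lem:SZ_properties} supplies the covariance in its four cases (diagonal/off-diagonal $\times$ same/different treatment). To collapse the off-diagonal part I would use $\sum_{i \neq j} Y_i(t) X_j = N^2 \bar{Y}(t)\bar{X} - \sum_i Y_i(t)X_i$ together with $\sum_i Y_i(t)X_i - N\bar{Y}(t)\bar{X} = (N-1)S_{tx}$ from (\ref{eq:cov_yx}). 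After simplification the same-treatment terms give $\cov(\bar{y}(1),\bar{x}(1)) = \frac{N_0}{N_1 N}S_{1x}$ and $\cov(\bar{y}(0),\bar{x}(0)) = \frac{N_1}{N_0 N}S_{0x}$, while the cross-treatment terms give $\cov(\bar{y}(1),\bar{x}(0)) = -S_{1x}/N$ and $\cov(\bar{y}(0),\bar{x}(1)) = -S_{0x}/N$. Combining with the signs coming from $D = \bar{x}(1) - \bar{x}(0)$, the coefficients of $S_{1x}$ sum via $N_0/N_1 + 1 = N/N_1$ to $S_{1x}/N_1$, and likewise those of $S_{0x}$ collapse to $S_{0x}/N_0$, yielding $\cov(\hat{\tau}, D) = S_{1x}/N_1 + S_{0x}/N_0$.

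The main obstacle I anticipate is organizational rather than conceptual: correctly tracking the four cases of Lemma \ref{lem:SZ_properties} across the four covariance terms---in particular the role-swap of $N_{T_i}$ and $N_{1-T_i}$ between the $t = t'$ and $t \neq t'$ cases---and verifying that the seemingly asymmetric factors $N_0/N_1$ and $N_1/N_0$ combine cleanly with the cross terms to give the symmetric answer $S_{1x}/N_1 + S_{0x}/N_0$. Assembling the three pieces into $\var(\hat{\tau}^*) = \var(\hat{\tau}) + (1/n_{x_1} + 1/n_{x_0})S_x^2 - 2(S_{1x}/N_1 + S_{0x}/N_0)$ then reproduces (\ref{eq:vartauhatstar}).
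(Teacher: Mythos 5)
Your proposal is correct and matches the paper's proof essentially step for step: the same decomposition $\var(\hat{\tau}^*) = \var(\hat{\tau}) + \var(D) - 2\cov(\hat{\tau},D)$, the same four-case expansion of $\cov(\hat{\tau},D)$ via Lemma \ref{lem:SZ_properties}, and the same intermediate covariances $\frac{N_{1-T}}{N_T}\frac{S_{Tx}}{N}$ and $-\frac{S_{Tx}}{N}$ collapsing to $\frac{S_{1x}}{N_1} + \frac{S_{0x}}{N_0}$. Your only variation---obtaining $\var(D)$ by substituting $X_i$ for both potential outcomes in (\ref{eq:vartauhat}) so that the $S^2_{\tau}$ analogue vanishes---is a valid shortcut, since the paper's corresponding Lemma \ref{lem:theorem3_lem1} is itself proved by repeating the arguments of Lemmas \ref{lem:theorem2_lem2} and \ref{lem:theorem2_lem3} with the $Z$ indicators, whose moments (Lemma \ref{lem:Z_properties}) mirror those of the $S$ indicators.
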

\noindent
The proof of Theorem \ref{thm:tauhatstar_properties} is in Appendix \ref{appendixE}.

\begin{remark} \label{rem:thm3_implications}
From Theorem \ref{thm:tauhatstar_properties}, it follows that $\hat{\tau}^*$ is a more efficient estimator than $\hat{\tau}$ if and only if 
\begin{equation}
\begin{aligned}
2 \left( \frac{S_{1x}}{N_1} + \frac{S_{0x}}{N_0} \right) &> \left( \frac{1}{n_{x_1}} + \frac{1}{n_{x_0}} \right) S_x^2, \text{ which implies} \label{eq:eff_cond} \\
2 \left( \frac{S_1}{N_1} r_{1x} + \frac{S_0}{N_0} r_{0x} \right) &> \left( \frac{1}{n_{x_1}} + \frac{1}{n_{x_0}} \right) S_x,
\end{aligned}
\end{equation}
\noindent
where $r_{Tx} = \frac{S_{Tx}}{S_T S_x}$ denotes the finite sample correlation coefficient between the potential outcomes $Y(T)$ and pre-treatment measurements $X$. In order for condition (\ref{eq:eff_cond}) to be achieved, pre-treatment measurements need to be highly predictive of the outcome, post-treatment outcomes need to be substantially variable relative to the pre-treatment measurements, and pre-treatment measurements need to be obtained for a large portion of the population. For example, consider the case where we have a balanced design (i.e., $N_1 = N_0 = N/2$) and a balanced sample size ($n_{x_1} = n_{x_0} = n_x/2$). In this case, even if the pre-treatment measurements are perfectly correlated with the outcomes (i.e., $r_{1x} = r_{0x} = 1$), in order for (\ref{eq:eff_cond}) to be achieved, we need $\frac{n_x}{N} (S_1 + S_0) > S_x$, and usually $\frac{n_x}{N}$ will be small due to resource constraints. Similarly, if the pre-treatment measurements are moderately correlated with the outcomes (e.g., $r_{1x} = r_{0x} = 0.5$) and all units' pre-treatment measurements are observed (i.e., $n_x = N$), we still need $\frac{S_1 + S_0}{2} > S_x$, i.e., the average standard deviation of the post-treatment outcomes needs to be larger than the standard deviation of the pre-treatment measurements. Cases where $r_{1x} = r_{0x} = 1$ or $n_x = N$ are indeed quite extreme and unrealistic conditions, and thus Theorem \ref{thm:tauhatstar_properties} in general gives credence to the skepticism many causal inference experts have about using gain scores for estimation of treatment effects in the context of the experimental design discussed here. 
\end{remark}

\section{Simulation Study: Considering an Experiment in Nanomaterials} \label{s:simulations}

To assess how the findings from Theorems \ref{thm:tauhat_properties} and \ref{thm:tauhatstar_properties} are informative for real experiments, we now conduct a simulation study based on a real experiment in nanomaterials \citep{remillard2017tuning}. One purpose of this experiment was to assess how the dimensions of carbon nanotubes change as they undergo different processing procedures. One such procedure is sonication, the primary purpose of which is to disperse the nanotubes. However, sonication may also result in the undesirable outcome of breaking the nanotubes, possibly causing changes to their length. The experiment considered two types of sonication---bath sonication and probe sonication---which are the treatments in this application. Bath sonication is a more gentle procedure than probe sonication, so it was hypothesized that bath sonication would not decrease the length of the carbon nanotubes as much as probe sonication.

Because of practical constraints, this experiment was conducted using Design 2 in Figure \ref{Fig:Two_designs}. Because the nanotubes are so small, it was infeasible to select individual nanotubes for treatment, and instead, a container of nanotubes was evenly divided into two smaller containers, and each of these containers underwent bath sonication or probe sonication. After sonication, nanotubes were randomly selected and their length was measured after treatment. Furthermore, the length of a random sample of nanotubes was measured before treatment. Primary interest was in the average difference in log-length between bath sonication and probe sonication, i.e., the ATE; the log-length was used due to skewness in the length distribution across carbon nanotubes.  

In this paper, we have discussed two ATE estimators under Design 2: the mean-difference estimator $\hat{\tau}$ defined in (\ref{eq:tauhat}), and the mean-difference estimator $\hat{\tau}^*$ defined in (\ref{eq:improved_est}) that incorporates pre-treatment information. In the real experiment conducted in \cite{remillard2017tuning}, we could only implement Design 2 once, and thus we could not observe the behavior of these estimators across different randomizations and random samples of carbon nanotubes. In order to understand the behavior of these estimators across many implementations of Design 2, we will conduct a simulation study mimicking this experiment.\footnote{In actuality, the \cite{remillard2017tuning} experiment considered several treatment factors, such as oxidation as well as sonication, and it considered several types of carbon nanotubes. For ease of exposition, the simulation data discussed here is based on the subset of the experiment that considered the carbon nanotube called ``D15L1-5'' in \cite{remillard2017tuning}. We chose this type of carbon nanotube because it was already oxidated, and thus only sonication (i.e., one treatment factor) was used in the actual experiment for this type of carbon nanotube.}

Consider the following implementation of Design 2 for this experiment: $N = 1,000$ carbon nanotubes will be randomly divided into treatment groups of sizes $N_1 = 500$ (probe sonication) and $N_0 = 500$ (bath sonication). Random samples of sizes $n_{x1} = 150$ and $n_{x0} = 150$ will be obtained before treatment, and random samples of sizes $n_1 = 150$ and $n_0 = 150$ will be obtained after treatment; these were approximately the sample sizes that were used in the actual experiment. The individual log-length of each carbon nanotube will be measured for these samples.

First, the population of pre-treatment measurements $\mathbf{X}$ as well as post-treatment measurements $\mathbf{Y}(1)$ and $\mathbf{Y}(0)$ were generated using the following model:
\begin{equation}
\begin{aligned}
\label{eqn:simulationModel}
	\begin{pmatrix}
		\mathbf{X} \\
		\mathbf{Y}(0)
	\end{pmatrix} &\sim \mathcal{N} \begin{pmatrix}
		\begin{pmatrix}
		\mu_X \\
		\mu_{Y0}
	\end{pmatrix}, \begin{pmatrix}
		\sigma^2_X & \rho \sigma_X \sigma_{Y0} \\
		\rho \sigma_X \sigma_{Y0} & \sigma^2_{Y0}
	\end{pmatrix}
	\end{pmatrix} \\
	\mathbf{Y}(1) &= \mathbf{Y}(0) + \tau + \sigma_{\tau} \mathbf{Y}(0)
\end{aligned}
\end{equation}
As discussed in \cite{remillard2017tuning}, normality was a reasonable distributional assumption to place on the log-length of carbon nanotubes. The mean parameters ($\mu_X$ and $\mu_{Y0}$), variance parameters ($\sigma^2_X$ and $\sigma^2_{Y0}$), and ATE parameter $\tau$ were set to the sample estimates observed in the \cite{remillard2017tuning} experiment.\footnote{Specifically, $\mu_X = -0.09$, $\mu_{Y0} = 0.09$, $\sigma^2_X = 0.32$, $\sigma^2_{Y0} = 0.51$, and $\tau = -0.01$, where the units are on the $\log \mu m$ scale.} The treatment effect heterogeneity parameter $\sigma_{\tau}$ and association parameter $\rho$ could not be estimated in the experiment, because none of the $(\mathbf{X}, \mathbf{Y}(1), \mathbf{Y}(0))$ could ever be jointly observed. In this simulation study, we set $\sigma_{\tau} = 0.5$ to induce strong treatment effect heterogeneity \citep{ding2016randomization} and consider various values for $\rho$.

After the population-level triplet $\{\mathbf{X}, \mathbf{Y}(1), \mathbf{Y}(0)\}_{i=1,\dots,N}$ was generated \textit{once}, the following randomization and sampling procedures (i.e., Design 2) were repeated 10,000 times:
\begin{enumerate}
 	\item Each unit was randomly assigned to treatment group 1 or treatment group 0 with equal probability, such that $N_1 = 500$ and $N_0 = 500$.
 	\item Within each treatment group $T \in \{0,1\}$, $n_{xT} = 150$ units were randomly sampled and their $\mathbf{X}$ recorded as an observed pre-treatment measurement.
 	\item Within each treatment group $T \in \{0,1\}$, $n_{T} = 150$ units were randomly sampled and their $\mathbf{Y}(T)$ recorded as an observed post-treatment measurement.
 \end{enumerate} 
 For each of the 10,000 replications, we recorded the ATE estimators $\hat{\tau}$ defined in (\ref{eq:tauhat}) and $\hat{\tau}^*$ defined in (\ref{eq:improved_est}). Recall that $\hat{\tau}$ is the mean-difference estimator, and its properties under this design are established by Theorem \ref{thm:tauhat_properties}; meanwhile, $\hat{\tau}^*$ uses the pre-treatment measurements $\mathbf{X}$ to alter $\hat{\tau}$, and its properties are established by Theorem \ref{thm:tauhatstar_properties}.

First, let us consider one simulation setting to confirm the results in Theorems \ref{thm:tauhat_properties} and \ref{thm:tauhatstar_properties}. Figure \ref{fig:histogramComparison} shows the empirical distribution of $\hat{\tau}$ and $\hat{\tau}^*$ after 10,000 replications of the above procedure when the pre- and post-treatment measurements are moderately associated $(\rho = 0.5)$. There are two observations that can be made from Figure \ref{fig:histogramComparison}. First, the normal distributions for $\hat{\tau}$ and $\hat{\tau}^*$ (constructed using Theorems \ref{thm:tauhat_properties} and \ref{thm:tauhatstar_properties}, respectively) fit the empirical distributions of these estimators; this confirms our theoretical results, including the comparison of these estimators given in Remark \ref{rem:thm3_implications}. Second, under this setting, $\hat{\tau}^*$ is slightly more dispersed than $\hat{\tau}$, suggesting that using the pre-treatment measurements in this case inflates the variance of the ATE estimator.

\begin{figure}
	\centering
	\includegraphics[scale=0.5]{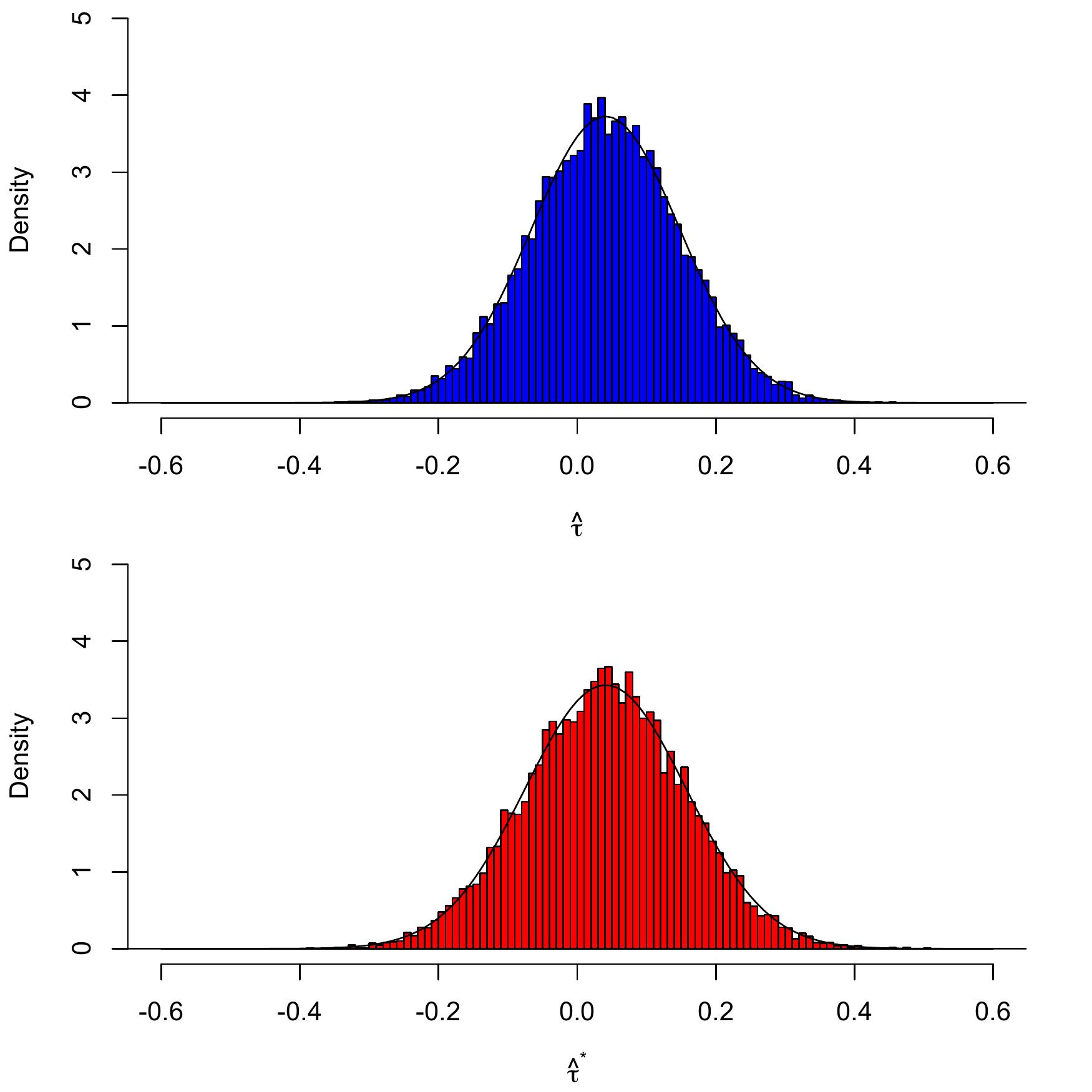}
	\caption{10,000 replications of $\hat{\tau}$ (top) and $\hat{\tau}^*$ (bottom) when the pre- and post-treatment measurements are moderately associated $(\rho = 0.5)$ and there is strong treatment effect heterogeneity $(\sigma_{\tau} = 0.5)$. The lines denote normal densities, where the mean is the true treatment effect and the variance is given by Theorem \ref{thm:tauhat_properties} for $\hat{\tau}$ and Theorem \ref{thm:tauhatstar_properties} for $\hat{\tau}^*$.}
	\label{fig:histogramComparison}
\end{figure}

This second observation begs the question: In this scenario, when will using the pre-treatment measurements improve estimation of the ATE? To address this question, we considered various values for the association parameter $\rho$ as well as the sample sizes $n_{xT}$ and $n_T$ for $T \in \{0,1\}$. Figure \ref{fig:varianceComparisonHeatmap} displays a heatmap of the empirical ratio $\frac{\text{Var}(\hat{\tau})}{\text{Var}(\hat{\tau}^*)}$ for various values of $\rho$ and the sample size. Looking at the bottom row of this heatmap, we can see that for the sample of size of 150 that was actually feasible for this experiment, even an association of $\rho = 0.9$ would not have led to $\hat{\tau}^*$ being more precise than $\hat{\tau}$. For this experiment, the sample size would have had to significantly increase while keeping the association moderately high in order for the pre-treatment measurements to be useful in increasing the precision of the ATE estimator. This echoes the observation made at the end of Section \ref{s:pretreat} that the use of gains scores is often not beneficial in improving ATE estimators.

\begin{figure}
	\centering
	\includegraphics[scale=0.5]{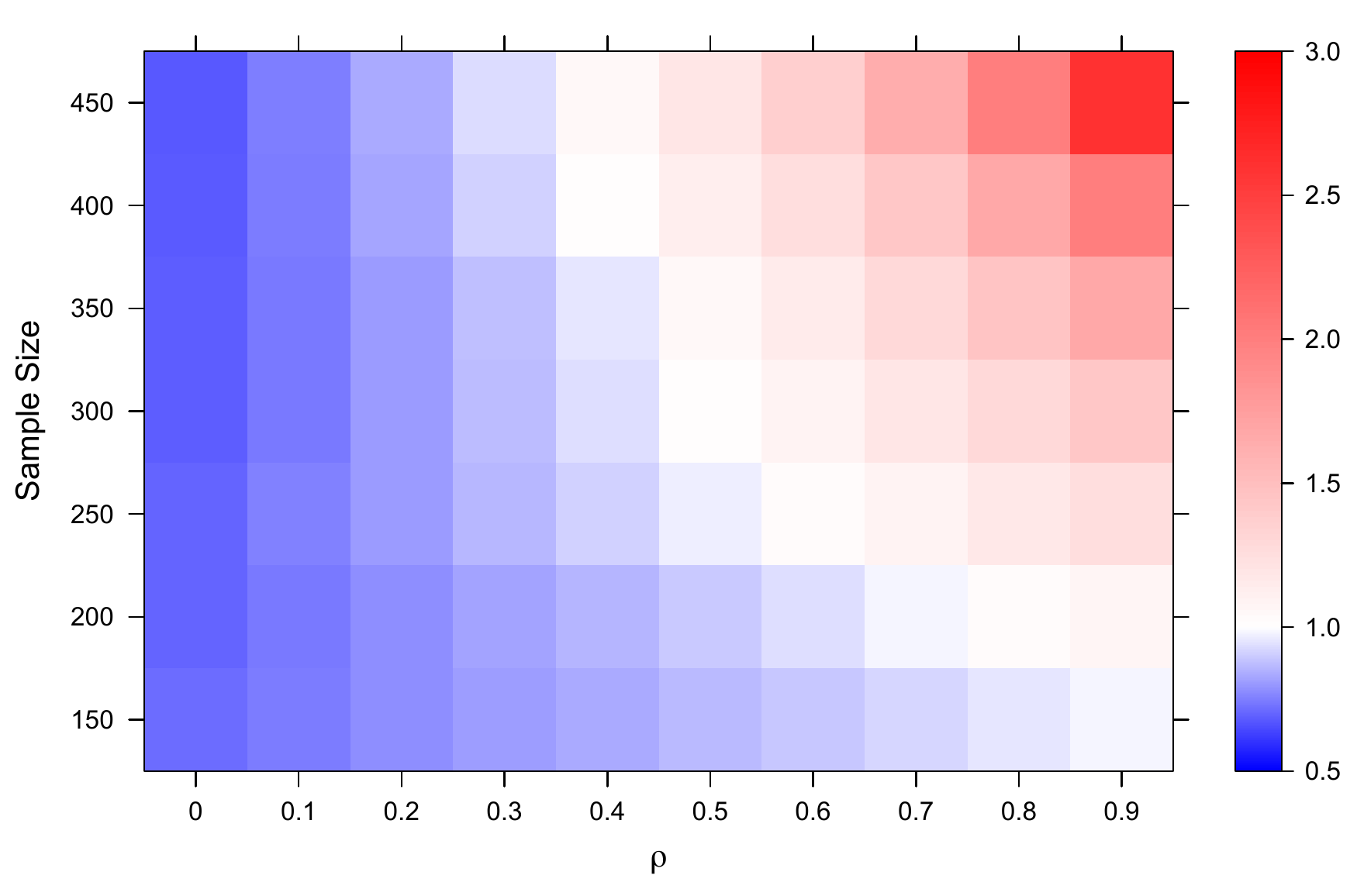}
	\caption{Heatmap of the empirical ratio $\frac{\text{Var}(\hat{\tau})}{\text{Var}(\hat{\tau}^*)}$ across the 10,000 replications. We considered values $\rho \in \{0, 0.1,\dots, 0.9\}$ in (\ref{eqn:simulationModel}) and sample sizes $150,200,\dots,450$ for $n_{x1}$, $n_{x0}$, $n_{1}$, and $n_{0}$. Blue shades suggest $\hat{\tau}$ is more precise; red shades suggest $\hat{\tau}^*$ is more precise.}
	\label{fig:varianceComparisonHeatmap}
\end{figure}

In summary, the above simulation study confirms that Theorems \ref{thm:tauhat_properties} and \ref{thm:tauhatstar_properties} correctly establish the behavior of the ATE estimators $\hat{\tau}$ and $\hat{\tau}^*$ under Design 2. However, for these data that are based on the experiment from \cite{remillard2017tuning}, an infeasibly large number of pre-treatment measurements would need to be obtained in order for $\hat{\tau}^*$ to be more precise than $\hat{\tau}$, and these measurements would need to be at least moderately associated with the post-treatment measurements. As noted in Remark \ref{rem:thm3_implications}, this will also be the case for other experiments, unless the post-treatment measurements are substantially more variable than the pre-treatment measurements, which was not the case for the \cite{remillard2017tuning} experiment.

\section{Discussion} \label{s:discussion}

Many causal inference works have focused on experimental settings where the outcomes for all units in the experiment can be measured. In some settings, it is too expensive to conduct an experiment on all units of interest, and instead an experiment is conducted on a random sample of units. Texts such as \cite{imbens2015causal} have shown that the inferential properties of common treatment effect estimators in these settings can be established by first accounting for the stochasticity of the sampling stage and then accounting for the stochasticity of the randomization stage. However, inferential properties under the experimental design scenario where the ordering of the sampling and randomization stages are reversed has not been established. Forms of this experimental design have become increasingly common in the physical, medical, and social sciences, and so it is important to understand statistical inference in this case.

We established the inferential properties of the mean-difference estimator under this experimental design scenario, and we compared our findings to results for other experimental designs. We found that the inferential properties of the mean-difference estimator under this experimental design scenario are identical to those under the sample-first-randomize-second design, which is the more common experimental design discussed in the literature. Thus, the ordering of the randomization and sampling stages is inconsequential for inference of average treatment effects. We also assessed if pre-treatment measurements of units' outcomes can be used to improve upon the mean-difference estimator for this experimental design scenario. We found that this is only the case if (1) the pre-treatment measurements are highly predictive of the outcome, (2) the post-treatment outcomes are substantially variable relative to the pre-treatment measurements, and (3) pre-treatment measurements are obtained for a large portion of the population. We also conducted a simulation study based on an experiment in nanomaterials \citep{remillard2017tuning} and found that these results hold for realistic applications.

A recent strand of causal inference literature has elucidated and leveraged the connection between experimental design and finite-population sampling to refine theory and methodology for randomized experiments. This includes theory on design-based estimators for treatment effects \citep{samii2012equivalencies,aronow2013class}, properties of covariate-adjustment in randomized experiments \citep{freedman2008regression,lin2013agnostic,miratrix2013adjusting}, and methods for estimating treatment effects in complex experimental settings such as cluster-randomized experiments \citep{middleton2015unbiased}, experiments with interference \citep{aronow2013estimating}, and experiments with multiple treatments \citep{mukerjee2018using}. This work continues this trend of using experimental design and finite-population sampling techniques to characterize treatment effect estimation in randomized experiments. As we discussed in Section \ref{ss:clusterRandomizedDesigns}, a promising line for future work is to establish analogous results for other types of experiments, such as cluster-randomized designs, where there are multiple stages of stochasticity through sampling, randomization, and other mechanisms.

\begin{appendices}

\section{Proof of Lemma \ref{lem:S_properties}} \label{appendixA}

To prove (\ref{eq:S1}), note that
\begin{eqnarray*}
\E \left[S_{i}^{T_i} \right] = Pr \left[S_{i}^{T_i} = 1 \right] = Pr [ \text{Unit} \ i \ \text{is assigned treatment $T_i$ and is sampled}]
= N_{T_i}/N \times n_{T_i}/N_{T_i} = n_{T_i}/N.
\end{eqnarray*}

\noindent Next, we have that
\begin{eqnarray*}
\var \left[S_{i}^{T_i} \right] = \E \left[ \left( S_{i}^{T_i} \right)^2 \right] - \E^2 \left[S_{i}^{T_i} \right]
= n_{T_i}/N -  \left\{ n_{T_i}/N \right\}^2 = n_{T_i}/N \{ 1 - n_{T_i}/N \},
\end{eqnarray*}
which proves (\ref{eq:S2}).

To prove (\ref{eq:S4}), first we consider the case where $i=i^{\prime}$ and $T_i \ne T_{i^\prime}$:
\begin{eqnarray*}
\cov \left[ S_i^{0}, S_i^{1} \right] &=& \E \left[ S_i^{0} S_i^{1} \right] - \E \left[ S_i^{0}] \E[S_i^{1} \right] \\
&=& Pr \left[ \text{Unit} \ i \ \text{receives both treatments and is sampled} \right] - \E \left[ S_i^{0}] \E[S_i^{1} \right]  \\
&=& 0 - (n_0/N)(n_1/N) = - n_0 n_1 / N^2. 
\end{eqnarray*}

\noindent Next, for the case where $i \ne i^{\prime}$ and $T_i = T_{i^\prime}$, we have that:
\begin{eqnarray*}
\E \left[ S_i^{T_i} S_{i^\prime}^{T_i} \right] &=& Pr \left[ \text{Units} \ i, i^{\prime} \ \text{both receive treatment} \ T_i \ \text{and are sampled} \right]  \\
&=& Pr \left[ i, i^{\prime} \ \text{are both sampled} \ | \ \text{both receive} \ T_i \right] Pr \left[ \text{both receive} \ T_i \right] \\
&=& \frac{n_{T_i} (n_{T_i} - 1)}{N_{T_i} (N_{T_i} - 1)} \times \frac{N_{T_i} (N_{T_i} - 1)}{N(N-1)} = \frac{n_{T_i} (n_{T_i} - 1)}{N(N-1)}. 
\end{eqnarray*}

\noindent Thus, it follows that, 
\begin{eqnarray*}
\cov \left[ S_i^{T_i}, S_{i^{\prime}}^{T_{i}} \right] &=& \E \left[ S_i^{T_i} S_{i^\prime}^{T_i} \right] - \E \left[ S_i^{T_i} \right] \E \left[ S_{i^\prime}^{T_i} \right] = \frac{n_{T_i} (n_{T_i} - 1)}{N(N-1)} - \left(n_{T_i}/N \right)^2 \\
&=& - \frac{n_{T_i} \left\{ N - n_{T_i} \right\}}{N^2(N-1)},
\end{eqnarray*}
after a little algebra.

\noindent Finally, for the case where $i \ne i^{\prime}$ and $T_i \ne T_{i^\prime}$, we have that:
\begin{eqnarray*}
\E \left[ S_i^{0} \ S_{i^{\prime}}^{1} \right] &=& Pr \left[ i, i^{\prime} \ \text{sampled} | i \ \text{receives 0 and} \ i^{\prime} \ \text{receives 1} \right] Pr \left[ i \ \text{receives 0 and} \ i^{\prime} \ \text{receives 1} \right] \\
&=& Pr[ i \ \text{sampled} \ | \ i \ \text{receives} \ 0] Pr[ i^{\prime} \ \text{sampled} \ | \ i^{\prime} \ \text{receives} \ 1] Pr \left[ i \ \text{receives 0 and} \ i^{\prime} \ \text{receives 1} \right] \\
&=& \frac{n_0}{N_0} \times \frac{n_1}{N_1} \times \frac{N_0 N_1}{N(N-1)}.
\end{eqnarray*}
\noindent Consequently,
\begin{eqnarray*}
\cov \left[ S_i^{0}, \ S_{i^{\prime}}^{1} \right] &=& \E \left[ S_i^{0} \ S_{i^{\prime}}^{1} \right] - \E \left[ S_i^{0} \right] \E \left[ S_{i^{\prime}}^{1} \right] \\
&=& \frac{n_0}{N_0} \times \frac{n_1}{N_1} \times \frac{N_0 N_1}{N(N-1)} - \frac{n_0}{N} \times \frac{n_1}{N} 
= \frac{n_0 n_1}{N^2 (N-1)}.
\end{eqnarray*}

\section{Proof of Theorem \ref{thm:tauhat_properties}} \label{appendixB}

To prove the first part, note that for $T \in \{0,1\}$, $\bar{y}(T)$ defined in (\ref{eq:groupavg}) is an unbiased estimator of $\bar{Y}(T)$ by (\ref{eq:S1}). Consequently, from (\ref{eq:tauhat}), $\hat{\tau}$ is an unbiased estimator of $\tau$. To prove the second part, we need the following three lemmas:
\begin{lemma} \label{lem:theorem2_lem1}
For $T \in \{0,1\}$, $$ S^2_{T} = N^{-1} \left\{ \sum_{i=1}^N Y_i^2(T) - (N-1)^{-1} \underset{i \ne i^{\prime}}{\sum} Y_i(T) Y_{i^{\prime}}(T) \right\}.$$
\end{lemma}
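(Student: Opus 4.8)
The plan is to prove this as a purely algebraic identity, rewriting the centered sum of squares that defines $S^2_T$ in (\ref{eq:varT}) in terms of the raw second moment $\sum_i Y_i^2(T)$ and the off-diagonal cross-products $\sum_{i \neq i'} Y_i(T) Y_{i'}(T)$. No probabilistic content is involved, since the potential outcomes are fixed. Fix $T \in \{0,1\}$ and, to reduce clutter, abbreviate $Y_i = Y_i(T)$ and $\bar{Y} = \bar{Y}(T)$ for the rest of the argument.

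First I would expand the squared deviations in the standard way, writing $\sum_{i=1}^N (Y_i - \bar{Y})^2 = \sum_{i=1}^N Y_i^2 - N \bar{Y}^2$, so that from (\ref{eq:varT}) we have $(N-1) S^2_T = \sum_{i=1}^N Y_i^2 - N\bar{Y}^2$. The only step with any content is re-expressing the term $N\bar{Y}^2$. Since $\bar{Y} = N^{-1}\sum_{i=1}^N Y_i$, we have $N\bar{Y}^2 = N^{-1}\left(\sum_{i=1}^N Y_i\right)^2$, and the key move is to split the square of the sum into its diagonal and off-diagonal contributions: $\left(\sum_{i=1}^N Y_i\right)^2 = \sum_{i=1}^N Y_i^2 + \sum_{i \neq i'} Y_i Y_{i'}$.

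Substituting this back gives $(N-1)S^2_T = \left(1 - N^{-1}\right)\sum_{i=1}^N Y_i^2 - N^{-1}\sum_{i \neq i'} Y_i Y_{i'}$, and collecting the coefficient of the first sum as $N^{-1}(N-1)$ lets me divide both sides by $N-1$ and factor out $N^{-1}$, yielding exactly the claimed expression.

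I do not expect any genuine obstacle here: the identity is just a rearrangement of the usual variance decomposition, so the only thing to watch is the bookkeeping of the factors of $N$ and $N-1$ and the clean separation of $\left(\sum_i Y_i\right)^2$ into its $i = i'$ and $i \neq i'$ parts. This lemma is evidently a preparatory rewriting whose purpose is to make the subsequent variance computation in Appendix \ref{appendixB} tractable, since the covariance structure in Lemma \ref{lem:S_properties} is stated separately for the diagonal ($i = i'$) and off-diagonal ($i \neq i'$) cases, so expressing $S^2_T$ in a form that isolates $\sum_i Y_i^2$ from $\sum_{i\neq i'} Y_i Y_{i'}$ will align the two pieces when $\var(\hat\tau)$ is assembled.
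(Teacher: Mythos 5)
Your proof is correct and follows essentially the same route as the paper's: both start from $(N-1)S^2_T = \sum_{i=1}^N Y_i^2(T) - N^{-1}\bigl(\sum_{i=1}^N Y_i(T)\bigr)^2$, split the squared sum into its diagonal and off-diagonal parts, and collect the factor $N^{-1}(N-1)$ before dividing through. The bookkeeping in your final step checks out exactly, so there is nothing to add.
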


\begin{proof}
By (\ref{eq:varT}), for $T \in \{0,1\}$,
\begin{eqnarray*}
(N-1) S^2_{T} &=&  \sum_{i=1}^N \{ Y_i(T) \}^2 - N^{-1} \left\{ \sum_{i=1}^N Y_i(T) \right\}^2 \\
&=& \sum_{i=1}^N \{ Y_i(T) \}^2 - N^{-1} \left[ \sum_{i=1}^N \{ Y_i(T) \}^2 + \underset{i \ne i^{\prime}}{\sum} Y_i(T) Y_{i^{\prime}}(T) \right] \\
&=& N^{-1}(N-1) \sum_{i=1}^N  \{ Y_i(T) \}^2 - N^{-1} \underset{i \ne i^{\prime}}{\sum} Y_i(T) Y_{i^{\prime}}(T) \\
&=& N^{-1}(N-1) \left[ \sum_{i=1}^N \{ Y_i(T) \}^2 - (N-1)^{-1} \underset{i \ne i^{\prime}}{\sum} Y_i(T) Y_{i^{\prime}}(T) \right].
\end{eqnarray*}
\end{proof}

\begin{lemma} \label{lem:theorem2_lem2}
For $T \in \{0,1\}$, $$ \var\left[ \bar{y}(T) \right] = \frac{N-n_{T}}{N} \cdot \frac{S^2_{T}}{n_{T}}.$$
\end{lemma}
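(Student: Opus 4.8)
The plan is to start from the representation $\bar{y}(T) = n_T^{-1} \sum_{i=1}^N S_i^T Y_i(T)$ given in (\ref{eq:groupavg}) and compute its variance by treating the potential outcomes $Y_i(T)$ as fixed constants, so that all the stochasticity resides in the sampling indicators $S_i^T$. First I would pull the factor $n_T^{-2}$ outside the variance and expand $\var\left[\sum_{i=1}^N S_i^T Y_i(T)\right]$ into a diagonal piece $\sum_{i=1}^N Y_i^2(T)\,\var[S_i^T]$ and an off-diagonal piece $\sum_{i \ne i'} Y_i(T) Y_{i'}(T)\,\cov[S_i^T, S_{i'}^T]$, using the bilinearity of variance for a linear combination of the $S_i^T$.

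Next I would substitute the two relevant entries from Lemma \ref{lem:S_properties}: the variance $\var[S_i^T] = (n_T/N)(1 - n_T/N)$ from (\ref{eq:S2}), and the same-treatment covariance $\cov[S_i^T, S_{i'}^T] = -n_T(N-n_T)/\{N^2(N-1)\}$ from the $i \ne i'$, $T_i = T_{i'}$ case in (\ref{eq:S4}). The crucial observation is that both coefficients share the common factor $n_T(N-n_T)/N^2$; factoring it out leaves exactly $\sum_{i=1}^N Y_i^2(T) - (N-1)^{-1}\sum_{i \ne i'} Y_i(T) Y_{i'}(T)$ inside the brackets.

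At this point I would invoke Lemma \ref{lem:theorem2_lem1}, which identifies that bracketed quantity as $N S^2_T$. Combining everything yields $\var\left[\sum_{i=1}^N S_i^T Y_i(T)\right] = n_T(N-n_T)S^2_T/N$, and dividing by $n_T^2$ gives the claimed $\frac{N-n_T}{N} \cdot \frac{S^2_T}{n_T}$.

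The only real obstacle is the bookkeeping in the factoring step: one must verify that the variance coefficient $(n_T/N)(1-n_T/N) = n_T(N-n_T)/N^2$ and the covariance coefficient $-n_T(N-n_T)/\{N^2(N-1)\}$ indeed share the factor $n_T(N-n_T)/N^2$, with the residual $-1/(N-1)$ on the cross terms matching precisely the structure demanded by Lemma \ref{lem:theorem2_lem1}. Everything else is routine substitution, and no properties beyond Lemma \ref{lem:S_properties} and Lemma \ref{lem:theorem2_lem1} are needed.
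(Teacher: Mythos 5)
Your proposal is correct and follows essentially the same route as the paper's own proof: expanding $\var\bigl[n_T^{-1}\sum_{i=1}^N S_i^T Y_i(T)\bigr]$ into diagonal and off-diagonal terms, substituting the variance and same-treatment covariance of the indicators from Lemma \ref{lem:S_properties}, factoring out $n_T(N-n_T)/N^2$, and closing with Lemma \ref{lem:theorem2_lem1}. The factoring bookkeeping you flag as the main obstacle checks out exactly as you describe, so nothing further is needed.
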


\begin{proof}
By (\ref{eq:groupavg}), for $T \in \{0,1\}$,
\begin{eqnarray*}
\var\left[ \bar{y}(T) \right] &=&  \var\left[ (n_{T})^{-1} \sum_{i=1}^N S_i^{T} Y_i(T) \right]  \\
&=& (n_{T})^{-2} \left[ \sum_{i=1}^N  \{ Y_i(T) \}^2 \var \left[ S_i^{T} \right]  + \underset{i \ne i^{\prime}}{\sum} Y_i(T) Y_{i^{\prime}}(T) \cov \left[ S_i^{T}, S_{i^{\prime}}^{T} \right] \right] \\
&=& (n_{T})^{-2} \left[ \frac{n_{T}}{N} \left( 1 - \frac{n_{T}}{N} \right) \sum_{i=1}^N  \{ Y_i(T) \}^2 - \frac{n_{T}(N-n_{T})}{N^2 (N-1)} \underset{i \ne i^{\prime}}{\sum} Y_i(T) Y_{i^{\prime}}(T)  \right], \ \mbox{by Lemma} \ \ref{lem:S_properties} \\
&=& \frac{N - n_{T}}{n_{T} N^2} \left[ \sum_{i=1}^N  \{ Y_i(T) \}^2 - (N-1)^{-1} \underset{i \ne i^{\prime}}{\sum} Y_i(T) Y_{i^{\prime}}(T) \right] \\
&=& \frac{N-n_{T}}{N} \cdot \frac{S^2_{T}}{n_{T}}, \ \mbox{by Lemma} \ \ref{lem:theorem2_lem1}. 
\end{eqnarray*}
\end{proof}

\begin{lemma} \label{lem:theorem2_lem3} 
The covariance between $\bar{y}(1)$ and $\bar{y}(0)$ is given by:
$$ \cov \left[ \bar{y}(1), \bar{y}(0) \right ] = - \frac{S^2_1 + S^2_0 - S^2_{\tau}}{2N}.$$
\end{lemma}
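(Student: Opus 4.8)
The plan is to expand the covariance as a double sum over the sampling indicators and then invoke Lemma \ref{lem:S_properties}. Using the representation (\ref{eq:groupavg}), write $\bar{y}(1) = n_1^{-1}\sum_{i} S_i^{1} Y_i(1)$ and $\bar{y}(0) = n_0^{-1}\sum_{i'} S_{i'}^{0} Y_{i'}(0)$. Bilinearity of covariance (the $Y_i(T)$ being fixed) gives
$$\cov\left[\bar{y}(1),\bar{y}(0)\right] = \frac{1}{n_1 n_0}\sum_{i=1}^N\sum_{i'=1}^N Y_i(1)\,Y_{i'}(0)\,\cov\left[S_i^{1}, S_{i'}^{0}\right].$$
The key simplification is that the first indicator always refers to treatment $1$ and the second to treatment $0$, so only the two ``$T_i \ne T_{i'}$'' cases of Lemma \ref{lem:S_properties} ever occur: the diagonal $i=i'$ term contributes $-n_1 n_0/N^2$ and the off-diagonal $i\ne i'$ term contributes $n_1 n_0/(N^2(N-1))$. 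No same-treatment covariance enters.

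Substituting these and cancelling the $n_1 n_0$ factor, I would separate the diagonal and off-diagonal parts:
$$\cov\left[\bar{y}(1),\bar{y}(0)\right] = -\frac{1}{N^2}\sum_{i=1}^N Y_i(1)Y_i(0) + \frac{1}{N^2(N-1)}\sum_{i\ne i'} Y_i(1)Y_{i'}(0).$$
I would then use the standard identity $\sum_{i\ne i'} Y_i(1)Y_{i'}(0) = \big(\sum_i Y_i(1)\big)\big(\sum_i Y_i(0)\big) - \sum_i Y_i(1)Y_i(0) = N^2\bar{Y}(1)\bar{Y}(0) - \sum_i Y_i(1)Y_i(0)$ to trade the off-diagonal sum for the diagonal one and the product of totals.

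Consolidating over the common denominator $N^2(N-1)$, the expression collapses to $\cov[\bar{y}(1),\bar{y}(0)] = -\frac{1}{N}(N-1)^{-1}\big(\sum_i Y_i(1)Y_i(0) - N\bar{Y}(1)\bar{Y}(0)\big) = -S_{10}/N$, recognizing the parenthesized quantity as the finite-population covariance $S_{10}$ defined in (\ref{eq:cov10}). Finally, I would rewrite $S_{10}$ in the target form: expanding $S_\tau^2$ in (\ref{eq:vartau}) through $\tau_i - \tau = (Y_i(1)-\bar{Y}(1)) - (Y_i(0)-\bar{Y}(0))$ yields $S_\tau^2 = S_1^2 + S_0^2 - 2S_{10}$, so $S_{10} = (S_1^2 + S_0^2 - S_\tau^2)/2$, and substitution gives the claim. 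The only step demanding care is the algebraic consolidation, where one must track the $(N-1)$ factors so that the diagonal contributions combine to exactly $-N\sum_i Y_i(1)Y_i(0)$ in the numerator; everything else is routine bookkeeping.
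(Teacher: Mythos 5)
Your proof is correct and takes essentially the same route as the paper's own proof in Appendix \ref{appendixB}: both expand the covariance as a double sum over the indicators, apply the two cross-treatment ($T_i \ne T_{i'}$) cases of Lemma \ref{lem:S_properties}, trade the off-diagonal sum for the diagonal one to collapse the expression to $-S_{10}/N$, and finish with the identity $S_{10} = \left(S_1^2 + S_0^2 - S_{\tau}^2\right)/2$. The algebraic consolidation you flag as delicate checks out exactly as in the paper, so there is nothing to repair.
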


\begin{proof}
\begin{eqnarray*}
\cov \left[ \bar{y}(1), \bar{y}(0) \right] &=& \cov \left[ n_1 ^{-1} \sum_{i=1}^N S_i^1 Y_i(1), n_0 ^{-1} \sum_{i=1}^N S_i^0 Y_i(0)  \right] \\
&=& (n_0 n_1)^{-1} \left[ \sum_{i=1}^N Y_i(1) Y_i(0) \cov \left[S_i^1, S_i^0 \right]  + \underset{i \ne i^{\prime}}{\sum} Y_i(1) Y_{i^{\prime}}(0) \cov \left[ S_i^{1}, S_{i^{\prime}}^{0} \right] \right] \\ 
&=& (n_0 n_1)^{-1} \left[  - \frac{n_0 n_1}{N^2} \sum_{i=1}^N Y_i(1) Y_i(0)  +  \frac{n_0 n_1}{N^2(N-1)} \underset{i \ne i^{\prime}}{\sum} Y_i(1) Y_{i^{\prime}}(0) \right], \ \mbox{by Lemma} \ \ref{lem:S_properties} \\ 
&=& - \frac{1}{N^2(N-1)} \left[  (N-1) \sum_{i=1}^N Y_i(1) Y_i(0)  -  \underset{i \ne i^{\prime}}{\sum} Y_i(1) Y_{i^{\prime}}(0) \right] \\
&=& - \frac{1}{N^2(N-1)} \left[  (N-1) \sum_{i=1}^N Y_i(1) Y_i(0)  - \left\{ \sum_{i=1}^N Y_i(1) \right\} \left\{ \sum_{i=1}^N Y_i(0) \right\} + \sum_{i=1}^N Y_i(1) Y_i(0) \right] \\
&=& - \frac{1}{N(N-1)} \left[ \sum_{i=1}^N Y_i(1) Y_i(0) - N \bar{Y}(1) \bar{Y}(0) \right] = - S_{10}/N, \ \mbox{by} \ (\ref{eq:cov10}).\\ 
\end{eqnarray*}
The result follows from the identity $S_{10} = \left(S^2_1 + S^2_0 - S^2_{\tau} \right)/2$, which is straightforward to establish.
\end{proof}

\noindent Straightforward applications of Lemma \ref{lem:theorem2_lem2} and Lemma \ref{lem:theorem2_lem3} lead to the proof of Theorem \ref{thm:tauhat_properties}.

\section{Deriving the Sampling Variance of $\widetilde{\tau}$} \label{appendixC}

Let $N$ denote the population size, $n$ denote the sample size, and $n_1$ and $n_0$ denote the sizes of treatment groups 1 and 0, respectively, all of which are fixed. Finally, let $S_i$ denote the sampling indicator, where $S_i = 1$ if unit $i$ is sampled to be in the completely randomized experiment, and $S_i = 0$ otherwise. Note that---following results from Lemma \ref{lem:S_properties} and from survey sampling (e.g., \citealt[Page 53]{lohr2009sampling})---we have that $\mathbb{E}[S_i] = n/N$, $\text{Var}(S_i) = \frac{n}{N} \left(1 - \frac{n}{N} \right)$, and $\text{Cov}(S_i, S_j) = - \frac{n(N-n)}{N^2(N-1)}$ for $i \neq j$. In \citet[Chapter 6, Appendix B]{imbens2015causal}, it was incorrectly stated that $\text{Cov}(S_i, S_j) = - \left( \frac{n}{N} \right)^2$, which is the main difference between their derivation and the derivation presented here.

The variance of $\widetilde{\tau}$ can be derived by conditioning on the sampling indicators, $\mathbf{S} = (S_1,\dots,S_N)$:
\begin{align}
	\text{Var}(\widetilde{\tau}) &= \mathbb{E}[ \text{Var}(\widetilde{\tau} | \mathbf{S})] + \text{Var} \left( \mathbb{E}[\widetilde{\tau} | \mathbf{S}] \right) \label{eqn:ateEVVEDecomposition}
\end{align}
As discussed in Section \ref{ss:comparison}, \cite{splawa1990application} showed that $\text{Var}(\widetilde{\tau} | \mathbf{S}) = \frac{s_1^2}{n_1} + \frac{s_0^2}{n_0} - \frac{s_{\tau}^2}{n}$, where $s_1^2$ and $s_0^2$ are the sample variances defined in (\ref{eq:sample_var}), and $s^2_{\tau} = \frac{1}{n-1} \sum_{i: S_i = 1} (\tau_i - \tau_S)^2$ is the variance of the treatment effects in the sample, where $\tau_S = n^{-1} \sum_{i: S_i = 1} \tau_i$ is the sample ATE. Meanwhile, $\mathbb{E}[\widetilde{\tau} | \mathbf{S}] = \tau_S$ \citep[Page 111]{imbens2015causal}. Finally, classical results on survey sampling (e.g., \citealt[Page 36]{lohr2009sampling}) have derived the variance of a sample mean under simple random sampling; these results give us $\text{Var}(\tau_S) = \frac{S^2_{\tau}}{n} \left(1 - \frac{n}{N} \right)$. Importantly, this last result is a consequence of the fact that $\text{Cov}(S_i, S_j) = - \frac{n(N-n)}{N^2(N-1)}$ for $i \neq j$.

Plugging these results back into (\ref{eqn:ateEVVEDecomposition}), we have
\begin{align}
	\text{Var}(\widetilde{\tau}) &= \mathbb{E} \left[ \frac{s_1^2}{n_1} + \frac{s_0^2}{n_0} - \frac{s_{\tau}^2}{n} \right] + \text{Var} \left( \tau_S \right) \\
	&= \frac{S_1^2}{n_1} + \frac{S_0^2}{n_0} - \frac{S_{\tau}^2}{n} + \text{Var} \left( \tau_S \right) \hspace{0.1 in} \text{(where $S_1^2$, $S_0^2$, and $S_{\tau}^2$ are defined in Section \ref{s:notations})} \\
	&= \frac{S_1^2}{n_1} + \frac{S_0^2}{n_0} - \frac{S_{\tau}^2}{n} + \frac{S^2_{\tau}}{n} \left(1 - \frac{n}{N} \right) \\
	&= \frac{S_1^2}{n_1} + \frac{S_0^2}{n_0} - \frac{S_{\tau}^2}{N}
	&= \text{Var}(\hat{\tau})
\end{align}
as mentioned in Section \ref{ss:comparison}, Equation (\ref{eqn:comparison}).

\section{Proof of Lemma \ref{lem:SZ_properties}} \label{appendixD}

We will consider the four cases in Lemma \ref{lem:SZ_properties}. For the first case, when $i = i'$ and $T_i = T_{i'}$,
\begin{equation}
\begin{aligned}
	\text{Cov}(S_i^{T_i}, Z_{i}^{T_{i}}) &= \mathbb{E}[S_i^{T_i} Z_{i}^{T_{i}}] - \mathbb{E}[S_i^{T_i}] \mathbb{E}[Z_{i}^{T_{i}}] \\
	&= Pr(S_i^{T_i} = 1, Z_{i}^{T_{i}} = 1) - \frac{n_{T_i} n_{x_{T_{i}}}}{N^2} \\
	&= Pr(S_i^{T_i} = 1 | Z_{i}^{T_{i}} = 1) Pr(Z_{i}^{T_{i}} = 1) - \frac{n_{T_i} n_{x_{T_{i}}}}{N^2} \\
	&= Pr(S_i^{T_i} = 1 | \text{unit $i$ is assigned to $T_i$}) Pr(Z_{i}^{T_{i}} = 1) - \frac{n_{T_i} n_{x_{T_{i}}}}{N^2} \\
	&= \frac{n_{T_i}}{N_{T_i}} \frac{n_{x_{T_{i}}}}{N} - \frac{n_{T_i} n_{x_{T_{i}}}}{N^2} \\
	&= \frac{N_{1-T_i}n_{T_i}n_{x_{T_i}}}{N^2 N_{T_i}}, \text{ after some algebra.}
	\label{eqn:lemma4Case1}
\end{aligned}
\end{equation}

Now for the second case. When $i = i'$ and $T_i \neq T_{i'}$, $Pr(S_i^{T_i} = 1, Z_{i'}^{T_{i'}} = 1) = 0$. Plugging this into the same procedure as (\ref{eqn:lemma4Case1}), we have that $\text{Cov}(S_i^{T_i}, Z_{i'}^{T_{i'}}) = - \frac{n_{T_i} n_{x_{T_{i'}}}}{N^2}$.

Now for the third case. When $i \neq i'$ and $T_i = T_{i'}$,
\begin{equation}
\begin{aligned}
	\text{Cov}(S_i^{T_i}, Z_{i'}^{T_{i}}) &= \mathbb{E}[S_i^{T_i} Z_{i'}^{T_{i}}] - \mathbb{E}[S_i^{T_i}] \mathbb{E}[Z_{i'}^{T_{i}}] \\
	&= Pr(S_i^{T_i} = 1, Z_{i'}^{T_{i}} = 1) - \frac{n_{T_i} n_{x_{T_{i}}}}{N^2} \\
	&= Pr(S_i^{T_i} = 1 | Z_{i'}^{T_{i}} = 1) Pr(Z_{i'}^{T_{i}} = 1) - \frac{n_{T_i} n_{x_{T_{i}}}}{N^2} \\
	&= Pr(S_i^{T_i} = 1 | \text{unit $i'$ is assigned to $T_i$}) Pr(Z_{i'}^{T_{i}} = 1) - \frac{n_{T_i} n_{x_{T_{i}}}}{N^2} \\
	&= \frac{n_{T_i} - 1}{N_{T_i} - 1} Pr(Z_{i'}^{T_{i}} = 1) - \frac{n_{T_i} n_{x_{T_{i}}}}{N^2} \\
	&= \frac{n_{T_i} - 1}{N_{T_i} - 1} \frac{n_{x_{T_i}}}{N} - \frac{n_{T_i} n_{x_{T_{i}}}}{N^2} \\
	&= - \frac{N_{1 - T_i} n_{T_i} n_{x_{T_i}}}{N^2(N-1)N_{T_i}}, \text{ after some algebra.}
	\label{eqn:lemma4Case3}
\end{aligned}
\end{equation}
Now for the fourth case. When $i \neq i'$ and $T_i \neq T_{i'}$,
\begin{equation}
\begin{aligned}
	\text{Cov}(S_i^{T_i}, Z_{i'}^{T_{i'}}) &= \mathbb{E}[S_i^{T_i} Z_{i'}^{T_{i'}}] - \mathbb{E}[S_i^{T_i}] \mathbb{E}[Z_{i'}^{T_{i'}}] \\
	&= Pr(S_i^{T_i} = 1, Z_{i'}^{T_{i'}} = 1) - \frac{n_{T_i} n_{x_{1-T_{i}}}}{N^2} \\
	&= Pr(S_i^{T_i} = 1 | Z_{i'}^{T_{i'}} = 1) Pr(Z_{i'}^{T_{i'}} = 1) - \frac{n_{T_i} n_{x_{1-T_{i}}}}{N^2} \\
	&= \frac{n_{T_i}}{N_{T_i}} \frac{N_{T_i}}{N-1} Pr(Z_{i'}^{T_{i'}} = 1) - \frac{n_{T_i} n_{x_{1-T_{i}}}}{N^2} \\
	&= \frac{n_{T_i}}{N_{T_i}} \frac{N_{T_i}}{N-1} \frac{n_{x_{1-T_i}}}{N} - \frac{n_{T_i} n_{x_{1-T_{i}}}}{N^2} \\
	&= \frac{n_{T_i} n_{x_{1-T_i}}}{N^2(N-1)}, \text{ after some algebra.}
	\label{eqn:lemma4Case4}
\end{aligned}
\end{equation}

\section{Proof of Theorem \ref{thm:tauhatstar_properties}} \label{appendixE}

To prove this theorem, we first state and prove the following two lemmas:

\begin{lemma} \label{lem:theorem3_lem1}
For $T, T^{\prime} \in \{0,1\}$, $\var[\bar{x}(T)] = \left(1 - n_{x_T}/N \right) S^2_x/n_{x_T}$ and
$\cov [\bar{x}(T),\bar{x}(T^{\prime})] = - S^2_x/N$.
\end{lemma}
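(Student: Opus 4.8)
The plan is to prove both identities by mirroring the proofs of Lemmas \ref{lem:theorem2_lem2} and \ref{lem:theorem2_lem3}, with the potential outcomes $Y_i(T)$ replaced everywhere by the pre-treatment measurements $X_i$, the sampling indicators $S_i^{T}$ replaced by $Z_i^{T}$, and the sample sizes $n_{T}$ replaced by $n_{x_T}$. This substitution is legitimate precisely because Lemma \ref{lem:Z_properties} guarantees that the $Z_i^{T}$ possess exactly the first- and second-moment structure of the $S_i^{T}$ under this dictionary; moreover, the purely algebraic identity of Lemma \ref{lem:theorem2_lem1} holds verbatim for any fixed numbers, in particular for the $X_i$ with $S_x^2$ defined by (\ref{eq:varx}).

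For the variance, I would write $\bar{x}(T) = (n_{x_T})^{-1}\sum_{i=1}^N Z_i^{T} X_i$ and expand by bilinearity,
\begin{equation*}
\var[\bar{x}(T)] = (n_{x_T})^{-2}\left[\sum_{i=1}^N X_i^2\, \var[Z_i^{T}] + \underset{i \ne i'}{\sum} X_i X_{i'}\, \cov[Z_i^{T}, Z_{i'}^{T}]\right].
\end{equation*}
Substituting $\var[Z_i^{T}] = \tfrac{n_{x_T}}{N}(1 - \tfrac{n_{x_T}}{N})$ and $\cov[Z_i^{T}, Z_{i'}^{T}] = -\tfrac{n_{x_T}(N - n_{x_T})}{N^2(N-1)}$ from Lemma \ref{lem:Z_properties}, and factoring out $\tfrac{N - n_{x_T}}{n_{x_T} N^2}$, the bracket reduces to $\sum_i X_i^2 - (N-1)^{-1}\sum_{i \ne i'} X_i X_{i'}$, which by the identity of Lemma \ref{lem:theorem2_lem1} (with $X_i$ in place of $Y_i(T)$) equals $N S_x^2$. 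This yields $\var[\bar{x}(T)] = \tfrac{N - n_{x_T}}{N}\cdot\tfrac{S_x^2}{n_{x_T}}$, as claimed.

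For the covariance, note that the case $T = T'$ is just the variance above, so the nontrivial claim is the case $T \ne T'$. Taking $T=1$, $T'=0$ and expanding gives
\begin{equation*}
\cov[\bar{x}(1), \bar{x}(0)] = (n_{x_1} n_{x_0})^{-1}\left[\sum_{i=1}^N X_i^2\,\cov[Z_i^{1}, Z_i^{0}] + \underset{i \ne i'}{\sum} X_i X_{i'}\,\cov[Z_i^{1}, Z_{i'}^{0}]\right].
\end{equation*}
Here Lemma \ref{lem:Z_properties} supplies $\cov[Z_i^{1}, Z_i^{0}] = -n_{x_1} n_{x_0}/N^2$ (the $i=i'$, $T_i \ne T_{i'}$ branch) and $\cov[Z_i^{1}, Z_{i'}^{0}] = n_{x_1} n_{x_0}/(N^2(N-1))$ (the $i \ne i'$, $T_i \ne T_{i'}$ branch). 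The factors $n_{x_1} n_{x_0}$ cancel, leaving an expression identical in form to the one in the proof of Lemma \ref{lem:theorem2_lem3} with $Y_i(1)Y_i(0)$ replaced by $X_i^2$ and $Y_i(1)Y_{i'}(0)$ by $X_i X_{i'}$. Completing the cross-product sum through $\sum_{i \ne i'} X_i X_{i'} = (\sum_i X_i)^2 - \sum_i X_i^2$ and recognizing $\sum_i X_i^2 - N\bar{X}^2 = (N-1)S_x^2$ then collapses everything to $\cov[\bar{x}(1),\bar{x}(0)] = -S_x^2/N$.

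I do not anticipate a genuine obstacle: the entire argument is a transcription of the already-established Lemmas \ref{lem:theorem2_lem1}--\ref{lem:theorem2_lem3} under the correspondence furnished by Lemma \ref{lem:Z_properties}. The only points requiring care are selecting the correct branch of Lemma \ref{lem:Z_properties} for each of the two covariance terms, and observing that the diagonal term now carries $X_i^2$ (rather than a product of two distinct outcomes), which plays the role occupied by $Y_i(1)Y_i(0)$ in Lemma \ref{lem:theorem2_lem3} and is what ultimately produces $S_x^2$ in place of $S_{10}$.
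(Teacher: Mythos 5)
Your proof is correct and is precisely the argument the paper intends: the paper's own proof of Lemma \ref{lem:theorem3_lem1} is a one-line remark that the result follows by the arguments of Lemmas \ref{lem:theorem2_lem2} and \ref{lem:theorem2_lem3}, and you have carried out exactly that transcription under the dictionary $(S_i^T, Y_i(T), n_T) \mapsto (Z_i^T, X_i, n_{x_T})$ licensed by Lemma \ref{lem:Z_properties}, including the correct reading of the covariance claim as the $T \neq T'$ case and the correct observation that $X_i^2$ plays the role of $Y_i(1)Y_i(0)$, producing $S_x^2$ where Lemma \ref{lem:theorem2_lem3} produced $S_{10}$. If anything, your writeup is slightly cleaner than the paper's citation: only Lemma \ref{lem:Z_properties} is actually needed here, since no mixed $S$--$Z$ covariances arise, whereas the paper also cites Lemma \ref{lem:SZ_properties}, which is first required in Lemma \ref{lem:theorem3_lem2}.
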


\begin{proof}
The proof follows from arguments similar to those in the proofs of Lemma \ref{lem:theorem2_lem2} and Lemma \ref{lem:theorem2_lem3}, utilizing the properties of indicators $(Z_i^{T_i},S_i^{T_i})$ stated in Lemma \ref{lem:Z_properties} and \ref{lem:SZ_properties}.
\end{proof}

\begin{lemma} \label{lem:theorem3_lem2}
For $T, T^{\prime} \in \{0,1\}$,
\begin{align}
	\cov \left[ \bar{y}(T), \bar{x}(T^{\prime}) \right] = \begin{cases}
		\frac{N_{1-T}}{N_T} \frac{S_{Tx}}{N} &\mbox{ if } \hspace{0.1 in} T = T' \\
		- \frac{S_{Tx}}{N} &\mbox{ if } \hspace{0.1 in} T \neq T'
	\end{cases}
\end{align}
where $S_{Tx}$ is given by (\ref{eq:cov_yx}).
\end{lemma}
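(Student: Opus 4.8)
The plan is to mirror the structure of the proof of Lemma \ref{lem:theorem2_lem3}: expand the covariance of the two sample means as a double sum of outcome--covariate products weighted by the covariances of the sampling indicators, and then collapse that double sum using a standard algebraic identity. First I would write, from (\ref{eq:groupavg}) and (\ref{eq:x_avg}) and the bilinearity of covariance (recall the $Y_i(T)$ and $X_i$ are fixed),
$$\cov\left[\bar{y}(T), \bar{x}(T')\right] = \frac{1}{n_T\, n_{x_{T'}}} \sum_{i=1}^N \sum_{i'=1}^N Y_i(T)\, X_{i'}\, \cov\left[S_i^{T}, Z_{i'}^{T'}\right].$$
I would then split the double sum into its diagonal part ($i=i'$) and off-diagonal part ($i \neq i'$), and handle $T=T'$ and $T \neq T'$ as the two cases the lemma asserts.

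For the case $T = T'$, the diagonal terms are governed by the first line of Lemma \ref{lem:SZ_properties} and the off-diagonal terms by the third line, contributing coefficients $\frac{n_T n_{x_T} N_{1-T}}{N_T N^2}$ and $-\frac{n_T n_{x_T} N_{1-T}}{N_T N^2 (N-1)}$ respectively, with the factor $n_T n_{x_T}$ cancelling the prefactor. For the case $T \neq T'$, the diagonal terms use the second line and the off-diagonal terms the fourth line of Lemma \ref{lem:SZ_properties}, giving coefficients $-\frac{n_T n_{x_{T'}}}{N^2}$ and $\frac{n_T n_{x_{T'}}}{N^2(N-1)}$. In both cases the essential step is to apply the identity $\sum_{i \neq i'} Y_i(T) X_{i'} = N^2 \bar{Y}(T)\bar{X} - \sum_{i} Y_i(T) X_i$, which rewrites the off-diagonal sum in terms of the diagonal sum and the product of the two population means.

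After this substitution, both the $T=T'$ and the $T\neq T'$ expressions reduce to a constant multiple of $\sum_i Y_i(T) X_i - N \bar{Y}(T)\bar{X}$. Recognizing, from (\ref{eq:cov_yx}), that $(N-1) S_{Tx} = \sum_i \{Y_i(T) - \bar{Y}(T)\}\{X_i - \bar{X}\} = \sum_i Y_i(T) X_i - N \bar{Y}(T)\bar{X}$ then yields $\frac{N_{1-T}}{N_T}\frac{S_{Tx}}{N}$ when $T=T'$ and $-\frac{S_{Tx}}{N}$ when $T \neq T'$, as claimed. I do not anticipate a conceptual difficulty here; the work is essentially the same bilinear-covariance computation used for Lemmas \ref{lem:theorem2_lem2} and \ref{lem:theorem2_lem3}. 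The only real care required is bookkeeping --- correctly matching each diagonal and off-diagonal sub-sum to the appropriate line of Lemma \ref{lem:SZ_properties}, and tracking the $\frac{N}{N-1}$ factor that appears when the diagonal and collapsed off-diagonal contributions are combined before the final identification with $S_{Tx}$.
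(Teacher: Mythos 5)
Your proposal is correct and matches the paper's own proof: the paper likewise expands $\cov\left[\bar{y}(T), \bar{x}(T^{\prime})\right]$ bilinearly into diagonal and off-diagonal sums, substitutes the four covariance cases from Lemma \ref{lem:SZ_properties}, and collapses the off-diagonal sum exactly as in Lemma \ref{lem:theorem2_lem3} before identifying $(N-1)S_{Tx} = \sum_i Y_i(T)X_i - N\bar{Y}(T)\bar{X}$. Your case-by-case bookkeeping and the resulting coefficients are all accurate, so no gap remains.
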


\begin{proof}
For the first case, when $T = T'$,
\begin{eqnarray*}
\cov \left[ \bar{y}(T), \bar{x}(T) \right]  &=& \cov \left[ n_T^{-1} \sum_{i=1}^N S_i^T Y_i(T), n_{x_T}^{-1} \sum_{i=1}^N Z_i^T X_i \right] \\
&=& (n_T n_{x_T})^{-1} \left[ \sum_{i=1}^N Y_i(T) X_i \cov \left(S_i^T, Z_i^T \right) + \underset{i \ne i^{\prime}}{\sum} Y_i(T) X_{i^{\prime}} \cov \left( S_i^T, Z_{i^\prime}^T \right) \right]
\end{eqnarray*}
Substituting expressions for $\cov \left(S_i^T, Z_i^T \right)$ and $\cov \left( S_i^T, Z_{i^\prime}^T \right)$ from Lemma \ref{lem:SZ_properties} and after some algebraic simplifications similar to those in Lemma \ref{lem:theorem2_lem3}, it follows that
\begin{eqnarray*}
\cov \left[ \bar{y}(T), \bar{x}(T) \right]
&=& \frac{1}{N^2 N_T (N-1)} \left[ N N_{1-T} \sum_{i=1}^N Y_i(T) X_i - N_{1-T} \left \{ \sum_{i=1}^N Y_i(T) \right\} \left \{ \sum_{i=1}^N X_i \right\} \right] \\
&=& \frac{N_{1-T}}{N_T} \frac{S_{Tx}}{N}
\end{eqnarray*} 
Next, for the second case where $T \neq T'$,
\begin{eqnarray*}
\cov \left[ \bar{y}(T), \bar{x}(T^{\prime}) \right]  &=& \cov \left[ n_T^{-1} \sum_{i=1}^N S_i^T Y_i(T), n_{x_{1-T}}^{-1} \sum_{i=1}^N Z_i^{T^{\prime}} X_i \right] \\
&=& (n_T n_{x_{1-T}})^{-1} \left[ \sum_{i=1}^N Y_i(T) X_i \cov \left(S_i^T, Z_i^{T^{\prime}} \right) + \underset{i \ne i^{\prime}}{\sum} Y_i(T) X_{i^{\prime}} \cov \left( S_i^T, Z_{i^\prime}^{T^{\prime}} \right) \right]
\end{eqnarray*}
Substituting expressions for $\cov \left(S_i^T, Z_i^{T^{\prime}} \right)$ and $\cov \left( S_i^T, Z_{i^\prime}^{T^{\prime}} \right)$ from Lemma \ref{lem:SZ_properties} and after some algebraic simplifications similar to those in Lemma \ref{lem:theorem2_lem3}, the above expression simplifies to $- S_{Tx}/N$. \\
 
\end{proof}

\noindent  Now to prove Theorem \ref{thm:tauhatstar_properties}, we write
\begin{equation}
\var(\hat{\tau}^*) = \var \left[ \hat{\tau} \right] + \var \left[ \bar{x}(1) - \bar{x}(0) \right] - 2 \cov \left[ \hat{\tau}, \bar{x}(1) - \bar{x}(0) \right]. \label{eq:thm3_expression}
\end{equation}
\noindent The second term equals $\var[\bar{x}(1)] + \var[\bar{x}(0)] - 2 \cov [\bar{x}(1), \bar{x}(0)]$. Substituting the expressions for $\var[\bar{x}(1)]$, $\var[\bar{x}(0)]$ and $\cov [\bar{x}(1), \bar{x}(0)]$ from Lemma \ref{lem:theorem3_lem1} and after some simplification, we have that
\begin{equation}
\var \left[ \bar{x}(1) - \bar{x}(0) \right] = \left(n_{x_1}^{-1} + n_{x_0}^{-1} \right) S^2_x.   \label{eq:thm3_term2}
\end{equation}

\noindent The expression $\cov \left[ \hat{\tau}, \bar{x}(1) - \bar{x}(0) \right]$ in the third term can be written as:
\begin{eqnarray}
&& \cov \left[ \bar{y}(1) - \bar{y}(0), \bar{x}(1) - \bar{x}(0) \right] \nonumber \\
&=& \cov \left[ \bar{y}(1), \bar{x}(1) \right] - \cov \left[ \bar{y}(1), \bar{x}(0) \right] - \cov \left[ \bar{y}(0), \bar{x}(1) \right] + \cov \left[ \bar{y}(0), \bar{x}(0) \right] \nonumber \\
&=& \frac{N_0}{N_1} \frac{S_{1x}}{N} + \frac{S_{1x}}{N} + \frac{S_{0x}}{N} + \frac{N_1}{N_0} \frac{S_{0x}}{N} \nonumber \\
&=& \frac{S_{1x}}{N_1} + \frac{S_{0x}}{N_0}, \label{eq:thm3_term3}
\end{eqnarray}
\noindent where the last step follows by substitution of the relevant expressions from Lemma \ref{lem:theorem3_lem2}. 

\noindent Finally, the result follows by substituting (\ref{eq:thm3_term2}) and (\ref{eq:thm3_term3}) in (\ref{eq:thm3_expression}).

\end{appendices}

\bibliographystyle{apalike}
\bibliography{samplingRandomizationBib}

\end{document}